\newtheorem{theo}{Theorem}[section]
\newtheorem{lemma}[theo]{Lemma}
\newtheorem{cor}[theo]{Corollary}
\numberwithin{equation}{section}
\def\A{{\mathbb A}}
\def\C{\mathbb{C}}
\def\Z{\mathbb{Z}}
\def\Q{\mathbb{Q}}
\def\pre-tr{\operatorname{pre-tr}}
\def\Hom{\operatorname{Hom}}
\newcommand{\cH}{{\mathcal H}}
\newcommand{\mrB}{\operatorname{B}}
\newcommand{\mrE}{\operatorname{E}}
\newcommand{\supp}{\operatorname{Supp}}
\newcommand{\Ext}{\operatorname{Ext}}
\newcommand{\Gr}{\operatorname{Gr}}
\newcommand{\Fl}{\operatorname{Fl}}
\title[Cohomological Hall algebra of a symmetric quiver]
{Cohomological Hall algebra of a symmetric quiver}
\author{Alexander I. Efimov}
\address{Steklov Mathematical Institute of RAS, Gubkin str. 8, GSP-1, Moscow 119991, Russia}
\address{Independent University of Moscow, Moscow,
Russia} \email{efimov@mccme.ru}
\thanks{The author was partially supported by
"Dynasty" Foundation, RFBR (grant 4713.2010.1), and by AG Laboratory HSE, RF government grant, ag. 11.G34.31.0023.}
\begin{document}

\begin{abstract}In the paper \cite{KS}, Kontsevich and Soibelman in particular associate to each finite quiver $Q$ with a set of vertices $I$ the so-called Cohomological Hall algebra $\cH,$ which is $\Z_{\geq 0}^I$-graded. Its graded component $\cH_{\gamma}$ is defined as cohomology of Artin moduli stack
 of representations  with dimension vector $\gamma.$ The product comes from natural correspondences which parameterize extensions of representations.

In the case of symmetric quiver, one can refine the grading to $\Z_{\geq 0}^I\times\Z,$ and modify the product by a sign to get a super-commutative
algebra $(\cH,\star)$ (with parity induced by $\Z$-grading). It is conjectured in \cite{KS} that in this case the algebra $(\cH\otimes\Q,\star)$
 is free super-commutative generated by a $\Z_{\geq 0}^I\times\Z$-graded vector space of the form $V=V^{prim}\otimes\Q[x],$
  where $x$ is a variable of bidegree $(0,2)\in\Z_{\geq 0}^I\times\Z,$ and all the spaces $\bigoplus\limits_{k\in\Z}V^{prim}_{\gamma,k},$ $\gamma\in\Z_{\geq 0}^I.$
are finite-dimensional. In this paper we prove this conjecture (Theorem \ref{free_algebra_intro}).

   We also prove some explicit bounds on pairs $(\gamma,k)$ for which $V^{prim}_{\gamma,k}\ne 0$ (Theorem \ref{upper_bound_intro}).
   Passing to generating functions, we obtain the positivity result for quantum Donaldson-Thomas invariants, which was used 
   by S. Mozgovoy to prove Kac's conjecture for quivers with sufficiently many loops \cite{M}.
    Finally, we mention a connection with the paper of Reineke \cite{R}.\end{abstract}

\maketitle

\tableofcontents

\section{Introduction}

In this paper we study Cohomological Hall algebra (COHA) introduced by Kontsevich and Soibelman \cite{KS}, in the case of symmetric quiver
without potential. Our main result is the proof of Kontsevich-Soibelman conjecture on the freeness of COHA of symmetric quiver.

Consider a finite quiver $Q$ with a set of vertices $I$ and with $a_{ij}$ edges from $i\in I$ to $j\in I,$ so that $a_{ij}\in\Z_{\geq 0}.$
One can choose a very degenerate stability condition on the category of complex finite-dimensional representations, so that
stable representations are precisely the simple ones, and they all have the same slope. In particular, each representation is semi-stable with the same slope. Then, for each dimension vector $$\gamma=\{\gamma^i\}_{i\in I}\in\Z_{\geq 0}^I,$$ the moduli space of representations of $Q$  is
an Artin quotient stack $M_{\gamma}/G_{\gamma},$ where $M_{\gamma}$ is an affine space of all representations in coordinate vector spaces $\C^{\gamma^i},$
$G_{\gamma}=\prod\limits_{i\in I}GL(\gamma^i,\C),$ and the action is by conjugation. One then defines a $\Z_{\geq 0}^I$-graded $\Q$-vector space
$\cH$ by the formula
$$\cH=\bigoplus\limits_{\gamma\in \Z_{\geq 0}^I}\cH_{\gamma},\quad \cH_{\gamma}:=H^{\cdot}_{G_{\gamma}}(M_{\gamma,\Q}).$$
Note that originally in \cite{KS}, one takes cohomology with integer coefficients, but we will deal only with the result of tensoring by $\Q.$

Now, for each two vectors $\gamma_1,\gamma_2\in\Z_{\geq 0}^I,$ one has natural correspondence between the stacks $M_{\gamma_1}/G_{\gamma_1}$
and $M_{\gamma_2}/G_{\gamma_2},$ which parameterizes all extensions. We get natural maps of stacks
$$(M_{\gamma_1}/G_{\gamma_1})\times (M_{\gamma_2}/G_{\gamma_2})\leftarrow M_{\gamma_1,\gamma_2}/G_{\gamma_1,\gamma_2}\to M_{\gamma_1+\gamma_2}/G_{\gamma_1+\gamma_2},$$
which allow one to define the multiplication
\begin{equation}\label{shift}H^{\cdot}_{G_{\gamma_1}}(M_{\gamma_1})\otimes H^{\cdot}_{G_{\gamma_2}}(M_{\gamma_2})\to H^{\cdot-2\chi_Q(\gamma_1,\gamma_2)}_{G_{\gamma_1+\gamma_2}}(M_{\gamma_1+\gamma_2}),\end{equation}
where $\chi_Q(\gamma_1,\gamma_2)$ is the Euler form:
$$\chi_Q(\gamma_1,\gamma_2)=\sum\limits_{i\in I}\gamma_1^i\gamma_2^i-\sum\limits_{i,j\in I}a_{ij}\gamma_1^i\gamma_2^j.$$

It is proved in \cite{KS}, Theorem 1, that the resulting product on $\cH$ is associative, so this makes $\cH$ into a $\Z_{\geq 0}^I$-graded algebra,
which is called a (rational) Cohomological Hall algebra of a quiver $Q.$

Now we restrict to the case of symmetric quiver $Q,$ i.e. to the case $a_{ij}=a_{ji}.$ In this case the Euler form $\chi_Q(\gamma_1,\gamma_2)$
is symmetric as well. One defines a $(\Z_{\geq 0}^I\times\Z)$-graded algebra structure on $\cH,$ by assigning to a subspace $H^{k}_{G_{\gamma}}(M_{\gamma})$
a bigrading $(\gamma,k+\chi_Q(\gamma,\gamma)).$ It follows from the formula \eqref{shift} that the product is compatible with this grading. We also define a parity on $\cH$ to be induced by $\Z$-grading.

In general, the algebra $\cH$ for symmetric quiver is not super-commutative, but it becomes such after twisting the product by sign. Denote by $\star$ the resulting
super-commutative product. Our main result is the following Theorem which was conjectured in \cite{KS} (Conjecture 1).

\begin{theo}\label{free_algebra_intro}For any finite symmetric quiver $Q,$ the $(\Z_{\geq 0}^I\times\Z)$-graded algebra $(\cH,\star)$
is a free super-commutative algebra generated by a $(\Z_{\geq 0}^I\times\Z)$-graded vector space $V$
of the form $V=V^{prim}\otimes\Q[x],$ where $x$ is a variable of degree $(0,2)\in\Z_{\geq 0}^I\times\Z,$
and for any $\gamma\in\Z_{\geq 0}^I$ the space $V_{\gamma,k}^{prim}$ is non-zero (and finite-dimensional) only for finitely many $k\in\Z.$\end{theo}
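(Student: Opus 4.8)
The plan is to replace $\cH$ by its explicit combinatorial model and prove freeness there. Since each $M_\gamma$ is an affine space, $\cH_\gamma=H^{\cdot}_{G_\gamma}(M_\gamma)=H^{\cdot}(BG_\gamma)=\bigotimes_{i\in I}\Q[x_{i,1},\dots,x_{i,\gamma^i}]^{S_{\gamma^i}}$ is a tensor product of rings of symmetric functions, and the correspondence defining \eqref{shift}, after the sign twist, becomes an explicit shuffle product whose kernel is a ratio of products of linear factors $x_{j}-x_{i}$ with exponents governed by the Euler form $\chi_Q$. The variable $x$ of bidegree $(0,2)$ reflects the central scalars $\C^{\times}\subset G_\gamma$: the generic stabilizer of a representation is $\C^{\times}$, so the stacky cohomology carries a free $H^{\cdot}(B\C^{\times})=\Q[x]$ factor, and $V^{prim}$ is expected to be governed by the cohomology of the moduli of simple representations, which is finite-dimensional; this is where the finiteness of $\{k:V^{prim}_{\gamma,k}\neq 0\}$ will ultimately come from.

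Next I would reduce freeness to the absence of hidden relations. For any connected $(\Z_{\geq 0}^I\times\Z)$-graded super-commutative algebra $A$ with finite-dimensional graded pieces, a graded lift $V\hookrightarrow A_+$ of the indecomposables $A_+/A_+^2$ yields a canonical surjection $\Sym(V)\twoheadrightarrow A$ of super-commutative algebras; hence coefficientwise the Hilbert series of $A$ is bounded above by that of $\Sym(V)$, and $A$ is free precisely when the two series agree, i.e. when this surjection is injective. So everything reduces to showing that $(\cH,\star)$ has no relations beyond super-commutativity and, as a byproduct, that its Hilbert series factors as that of a free super-commutative algebra — which is exactly the positivity of the quantum Donaldson--Thomas invariants later exploited by Mozgovoy.

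To establish the no-relations statement I would introduce a monomial-type filtration on the symmetric-function model, ordering by the leading exponents of the $x_{i,\ell}$ across all $\gamma$, and analyze the leading term of the shuffle product. The expectation is that $f\star g$ is triangular for this order, with leading term the maximal shuffle of the leading terms of $f$ and $g$ up to an invertible scalar, so that $\gr(\cH,\star)$ is a quasi-shuffle-type algebra on symmetric functions. Such algebras are free super-commutative by a graded, signed version of Hoffman's theorem, with generators indexed by Lyndon-type words; and in the connected graded setting with finite-dimensional pieces, freeness of $\gr(\cH,\star)$ lifts to freeness of $(\cH,\star)$ itself, since any relation in $\cH$ would produce a relation among leading terms in $\gr$. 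Here the symmetry $a_{ij}=a_{ji}$ and the sign twist are essential: they are exactly what make the kernel symmetric and the product super-commutative, so that the leading-term combinatorics is that of a quasi-shuffle rather than of a genuinely noncommutative shuffle.

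The main obstacle is this last leading-term analysis — ruling out cancellations from the kernel that would destroy triangularity, equivalently proving that the associated graded is honestly free. I expect this to require the purity of the mixed Hodge structures on $H^{\cdot}_{G_\gamma}(M_\gamma)$ (all classes are of Tate type, with weight equal to cohomological degree), which rigidifies the product and forbids weight-lowering corrections; together with explicit control of the top-degree part of the kernel $\prod(x_j-x_i)^{a_{ij}}$, this should pin down the leading terms and, via the Hilbert-series comparison of the second step, upgrade the numerical factorization into the claimed isomorphism with a free super-commutative algebra. The general symmetric quiver is then assembled from this analysis, with the finiteness of $V^{prim}_{\gamma,\cdot}$ following from the boundedness of cohomological degrees on the finite-dimensional moduli of simple representations.
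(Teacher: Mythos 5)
Your setup is right---the explicit model $\cH_\gamma\cong\Q[x_{i,\alpha}]^{\prod_i S_{\gamma^i}}$ with the shuffle product, and the identification of the $\Q[x]$ factor with the ``central'' direction $\sigma_\gamma=\sum x_{i,\alpha}$ (the paper writes $\cH_\gamma=\cH_\gamma^{prim}\otimes\Q[\sigma_\gamma]$ and takes $V_\gamma=V_\gamma^{prim}\otimes\Q[\sigma_\gamma]$, with $V_\gamma^{prim}$ a graded complement to the span of decomposables). But the two steps you defer are exactly the content of the theorem, and the tools you propose for them do not work. First, the freeness. The product here is not a quasi-shuffle product on words: each shuffle is weighted by the kernel $\prod(x''_{j,\alpha_2}-x'_{i,\alpha_1})^{a_{ij}}/\prod(x''_{i,\alpha_2}-x'_{i,\alpha_1})$, and the whole difficulty is to rule out cancellation among these weighted shuffles; Hoffman's theorem and Lyndon-word bases have no purchase on this. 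The appeal to purity is vacuous: $H^{\cdot}(\mrB G_\gamma)$ is already pure of Tate type with weight equal to degree, and the product is homogeneous for the shifted bigrading, so purity excludes nothing that the grading does not already exclude. The paper's actual mechanism is different and essential: for a fixed composition $\gamma=\gamma_1+\dots+\gamma_k$ one localizes $A_{\gamma_1}\otimes\dots\otimes A_{\gamma_k}$ at all between-block differences $x^{(q)}_{j,\alpha_2}-x^{(p)}_{i,\alpha_1}$; a non-zero-divisor Claim (proved by expanding in powers of $\sigma_{\gamma_k}$) shows this localization is injective modulo the ideal $\widetilde{J_\gamma}$ generated by the $J_{\gamma_p}\cap A_{\gamma_p}^{prim}$, and after localization the kernel factor $F_{\gamma_1,\dots,\gamma_k}$ becomes a unit, so the product of basis vectors reduces, modulo lower terms, to a signed sum of pure tensors $\sum_\tau s(\tau)\,e_{\gamma_1,\beta_{\tau(1)}}\sigma^{m_{\tau(1)}}\otimes\cdots$, which is nonzero precisely because the $e_{\gamma,\beta}$ were chosen as a complement to $J_\gamma^{P_\gamma}$. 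Your sketch has no counterpart to either the localization or the non-zero-divisor statement, and without them the ``triangularity'' you hope for is unsubstantiated.

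Second, the finite-dimensionality of $V^{prim}_\gamma$. You route this through an identification of $V^{prim}$ with cohomology of the moduli of simple representations; that identification is itself a deep statement (not proved here or in the paper) and cannot be used as an input. The paper's argument is elementary: one may assume $a_{ii}\geq 1$ (which only shrinks $J_\gamma$), so that $J_\gamma$ is an honest ideal of $A_\gamma^{prim}$, and then one checks $\supp(A_\gamma^{prim}/J_\gamma)=\{0\}$ on the hyperplane $\sigma_\gamma=0$: at any nonzero point the coordinates are not all equal, so after a permutation some decomposition $\gamma=\gamma_1+\gamma_2$ separates them and the corresponding generator of $J_\gamma$ does not vanish there. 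Finally, note that your Hilbert-series reduction is fine but circular as stated: the factorization of $H_Q$ as a product of $q$-Pochhammer symbols (positivity of the DT invariants) is a corollary of freeness, not something you can feed back in to prove it.
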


The second result in this paper gives explicit bounds on pairs $(\gamma,k)$ for which $V_{\gamma,k}^{prim}\ne 0.$
For a given symmetric quiver $Q,$ and $\gamma\in \Z_{\geq 0}^I\setminus\{0\},$ we put
$$N_{\gamma}(Q):=\frac12(\sum\limits_{\substack{i,j\in I,\\ i\ne j}}a_{ij}\gamma^i\gamma^j+\sum\limits_{i\in I}\max(a_{ii}-1,0)\gamma^i(\gamma^i-1))-\sum\limits_{i\in I}\gamma^i+2.$$

\begin{theo}\label{upper_bound_intro}In the notation of Theorem \ref{free_algebra_intro}, if $V_{\gamma,k}^{prim}\ne 0,$ then $\gamma\ne 0,$
$$k\equiv \chi_Q(\gamma,\gamma)\text{ mod }2,\quad\text{and}\quad \chi_Q(\gamma,\gamma)\leq k< \chi_Q(\gamma,\gamma)+2N_{\gamma}(Q).$$\end{theo}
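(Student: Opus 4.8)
The plan is to extract the bounds on the primitive generators $V^{prim}_{\gamma,k}$ from an understanding of the cohomology $\cH_\gamma = H^{\cdot}_{G_\gamma}(M_\gamma)$ together with the free super-commutative structure established in Theorem~\ref{free_algebra_intro}. The three assertions are of quite different characters. The parity statement $k\equiv\chi_Q(\gamma,\gamma)\bmod 2$ and the lower bound $k\geq\chi_Q(\gamma,\gamma)$ should be almost formal, while the upper bound $k<\chi_Q(\gamma,\gamma)+2N_\gamma(Q)$ is the substantive content.

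\medskip

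\noindent\textbf{Parity and lower bound.} First I would recall that $M_\gamma$ is an affine space, so it is equivariantly formal and $H^{\cdot}_{G_\gamma}(M_\gamma)\cong H^{\cdot}_{G_\gamma}(\mathrm{pt})=H^{\cdot}(BG_\gamma)$ is concentrated in \emph{even} ordinary cohomological degrees, being a polynomial ring on even generators. Under the bigrading convention of the excerpt, a class in $H^k_{G_\gamma}(M_\gamma)$ sits in bidegree $(\gamma,k+\chi_Q(\gamma,\gamma))$, and since $k$ is even, the second grading is $\equiv\chi_Q(\gamma,\gamma)\bmod 2$. This congruence is preserved under the product $\star$ (additivity of $\chi_Q$), hence descends to the generating space $V$, and in particular to $V^{prim}$. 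For the lower bound, the minimal cohomological degree in $\cH_\gamma$ is $0$, giving second-grading $\geq\chi_Q(\gamma,\gamma)$; since $V^{prim}_{\gamma,\cdot}$ is (by Theorem~\ref{free_algebra_intro}) a subquotient of $\cH_\gamma$ sitting in the $\gamma$-graded piece, its degrees are bounded below by the minimal degree occurring in $\cH_\gamma$, which is exactly $\chi_Q(\gamma,\gamma)$. I would make this precise by choosing, for each fixed $\gamma$, a graded vector-space splitting of $\cH$ as the free super-commutative algebra on $V$ and arguing that the generators in $\cH_\gamma$ cannot have bidegree below the bottom of $\cH_\gamma$.

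\medskip

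\noindent\textbf{Upper bound.} This is where the real work lies, and where I expect the main obstacle. The number $N_\gamma(Q)$ looks like it should be (twice) the dimension of a smooth moduli space of \emph{stable} representations, or the dimension of a suitable resolution. Indeed, $\frac12\bigl(\sum_{i\ne j}a_{ij}\gamma^i\gamma^j+\sum_i\max(a_{ii}-1,0)\gamma^i(\gamma^i-1)\bigr)$ counts arrows modulo the diagonal $GL$-action, and $-\sum_i\gamma^i+2$ reflects the stabiliser and connectedness corrections; together $2N_\gamma(Q)$ should bound the top nonvanishing ordinary degree of the primitive part. The strategy I would pursue is: (i) interpret $V^{prim}_\gamma$, via the Poincar\'e/Hilbert series identity coming from freeness, in terms of the intersection cohomology or the cohomology of a moduli space of stable representations of dimension vector $\gamma$ (the factor $\Q[x]$ corresponding to scaling/the $\mathbb{G}_m$-gerbe, i.e. the $BGL(1)$ part); (ii) bound the cohomological dimension of that space by its (complex) dimension, which is governed by $N_\gamma(Q)$. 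The comparison with Reineke's work \cite{R} and the quantum Donaldson--Thomas invariants, flagged in the abstract, strongly suggests that the generating-function manipulation is the intended route: one writes the product formula for $\sum_\gamma (\dim\cH_\gamma)$-type series, inverts it using freeness to solve for the series of $V^{prim}$, and reads off vanishing above the stated degree from the positivity and the degree shift built into each factor.

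\medskip

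\noindent\textbf{Anticipated obstacle and refinement.} The hard part will be converting the Poincar\'e-series identity into a genuine \emph{sharp} degree bound on $V^{prim}$ rather than a bound on $\cH$ itself: freeness expresses $\cH$ as a polynomial algebra on $V=V^{prim}\otimes\Q[x]$, so the series of $V^{prim}$ is obtained by a (logarithmic/plethystic) inversion of the series of $\cH$, and controlling the top degree of an inverted series requires that no cancellation pushes degrees upward. I would handle this by induction on $\gamma$ (say on $\sum_i\gamma^i$ or a height function), peeling off the contribution of decomposable classes $\cH_{\gamma_1}\star\cH_{\gamma_2}$ with $\gamma_1+\gamma_2=\gamma$ and $\gamma_1,\gamma_2\neq 0$, and showing that both the decomposable part and the polynomial $\Q[x]$-tails respect the bound $\chi_Q(\gamma,\gamma)+2N_\gamma(Q)$; the key inequality to verify is a superadditivity/subadditivity relation of the form $N_{\gamma_1}(Q)+N_{\gamma_2}(Q)+(\text{cross term from }\chi_Q)\leq N_\gamma(Q)$ ensuring the inductive degrees never exceed the target. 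Establishing that combinatorial inequality for $N_\gamma(Q)$, and matching the top ordinary cohomological degree of $\cH_\gamma$ to $2N_\gamma(Q)$ via the explicit structure of $H^{\cdot}(BG_\gamma)$ modulo decomposables, is the crux I would spend the most effort on.
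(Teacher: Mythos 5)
Your treatment of the parity constraint and the lower bound is correct and coincides with the paper's, which dispatches these in one sentence: $\cH_\gamma\cong H^{\cdot}(\mrB G_\gamma)$ is concentrated in even non-negative degrees and the bigrading shifts by $\chi_Q(\gamma,\gamma)$. The gap is in the upper bound, where neither of your routes closes. Route (i) cannot be invoked: an identification of $V^{prim}_\gamma$ with intersection cohomology of a moduli space of stables is not a consequence of Theorem \ref{free_algebra_intro}, is not established anywhere in the paper, and would be a substantial theorem in its own right; moreover a bound by the dimension of such a moduli space does not reproduce $N_\gamma(Q)$, which for the $m$-loop quiver equals $(m-1)\binom{n}{2}-n+2$ while the relevant moduli space has dimension $(m-1)n^2+1$. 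Route (ii) is circular in the relevant sense: by \eqref{stupid} the generating function $H_Q$ is determined by $\chi_Q$ alone, and freeness together with $V=V^{prim}\otimes\Q[x]$ determines $\dim V^{prim}_{\gamma,k}$ from $H_Q$ by plethystic inversion as an alternating sum; the asserted vanishing of that sum for $k\geq\chi_Q(\gamma,\gamma)+2N_\gamma(Q)$ is precisely the combinatorial statement (Reineke's conjecture in the one-vertex case) that the theorem is proving, so it cannot simply be read off from the series. Your proposed repair --- induction on $\sum_i\gamma^i$ with a superadditivity inequality for $N_\gamma$ --- attacks the wrong quantity: the decomposable part $\sum_{\gamma_1+\gamma_2=\gamma}\cH_{\gamma_1}\star\cH_{\gamma_2}$ is unbounded in cohomological degree, so the task is not to bound degrees of products of generators from above but to show that \emph{every} class of sufficiently high degree is decomposable. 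That is an ideal-membership statement, which degree bookkeeping cannot deliver.

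The missing mechanism is the following. The proof of Theorem \ref{free_algebra} constructs $V^{prim}_\gamma$ explicitly as a graded complement of $J_\gamma^{P_\gamma}$ in $\cH_\gamma^{prim}=(A_\gamma^{prim})^{P_\gamma}$, where $J_\gamma$ is generated by the $P_\gamma$-orbit of the shuffle factors \eqref{expr} --- products of powers of the pairwise non-proportional linear forms $x_{j,\alpha_2}-x_{i,\alpha_1}$ (after replacing $a_{ii}$ by $\max(a_{ii},1)$, which only shrinks $J_\gamma$ and makes it an honest ideal). The upper bound is then exactly the inclusion $(A_\gamma^{prim})^d\subset J_\gamma$ for $d\geq 2N_\gamma(Q)$, and the substantive input is Lemma \ref{effective_gen}, an effective Nullstellensatz-type bound: if an ideal of $\mathrm{k}[z_1,\dots,z_n]$ generated by products $l_1^{d_{i1}}\cdots l_s^{d_{is}}$ of pairwise independent linear forms has finite codimension (already known here from the finite-dimensionality argument for $V^{prim}_\gamma$), then it contains everything of degree at least $d_1+\cdots+d_s-n+1$, where $d_j=\max_i d_{ij}$. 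The quantity $N_\gamma(Q)$ is by definition $d_1+\cdots+d_s-n+1$ for this system of generators, with $n=\sum_i\gamma^i-1$; it is not a moduli-space dimension. To complete your proposal you would need both the explicit identification of the decomposables in $\cH_\gamma$ with $J_\gamma^{P_\gamma}\otimes\Q[\sigma_\gamma]$ and an effective degree bound of this kind.
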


The only non-trivial statement in Theorem \ref{upper_bound_intro} is the upper bound on $k.$ In the proofs of both Theorems, we use explicit formulas for the product in $\cH$ from \cite{KS}, Theorem 2. Namely, since the affine space $M_{\gamma}$ is $G_{\gamma}$-equivariantly
contractible, we have
$$\cH_{\gamma}\cong H^{\cdot}(\mrB G_{\gamma}),$$
and the RHS is isomorphic to the algebra of polynomials in $x_{i,\alpha},$ where $i\in I,$ $1\leq \alpha\leq \gamma^i,$
which are invariant with respect to the product of symmetric groups $S_{\gamma^i}.$ Then, given two polynomials $f_1\in \cH_{\gamma_1},$ $f_2\in\cH_{\gamma_2},$ their product $f_1\cdot f_2\in \cH_{\gamma},$ $\gamma=\gamma_1+\gamma_2,$ equals to the sum over all shuffles (for any $i\in I$) of the following rational function in variables $(x_{i,\alpha}')_{i\in I,\alpha\in \{1,\dots,\gamma_1^i\}},$
$(x_{i,\alpha}'')_{i\in I,\alpha\in \{1,\dots,\gamma_2^i\}}:$
$$f_1((x_{i,\alpha}'))f_2((x_{i,\alpha}''))
\frac{\prod\limits_{i,j\in I}\prod\limits_{\alpha_1=1}^{\gamma_1^i}\prod\limits_{\alpha_2=1}^{\gamma_2^j}(x_{j,\alpha_2}''-x_{i,\alpha_1}')^{a_{ij}}}
{\prod\limits_{i\in I}\prod\limits_{\alpha_1=1}^{\gamma_1^i}\prod\limits_{\alpha_2=1}^{\gamma_2^i}(x_{i,\alpha_2}''-x_{i,\alpha_1}')}.$$

Theorems \ref{free_algebra_intro} and \ref{upper_bound_intro} imply the corresponding results for the generating functions for Cohomological Hall algebras,
in particular, positivity for quantum Donaldson-Thomas invariants. The positivity result was used by S. Mozgovoy to prove
Kaz's conjecture for quivers with at least one loop at each vertex \cite{M}

The paper is organized as follows.

Section \ref{s:preliminaries} is devoted to some preliminaries on Cohomological Hall algebras for quivers. We follow \cite{KS}, Section 2.
In Subsection \ref{ss:COHA_definition} we give a definition of rational Cohomological Hall algebra for an arbitrary finite quiver.
Subsection \ref{ss:explicit} is devoted to explicit formulas for the product in Cohomological Hall algebras.
In Subsection \ref{ss:grading} we define an additional $\Z$-grading on COHA of symmetric quiver, so that we get a $(\Z_{\geq 0}^I\times\Z)$-graded
algebra. Then, we show how to modify a product on $\cH$ by a sign to get a super-commutative algebra $(\cH,\star),$ with parity induced
by $\Z$-grading.

Section \ref{s:main_results} is devoted to the proofs of Theorem \ref{free_algebra_intro} (Theorem \ref{free_algebra}) and
Theorem \ref{upper_bound_intro} (Theorem \ref{upper_bound}).

In Section \ref{s:DT_invariants} we discuss applications of our results to the generating function of COHA, or, in other words, to quantized Donaldson-Thomas invariants.

\smallskip

{\noindent {\bf Acknowledgements.}} I am grateful to Maxim Kontsevich and Yan Soibelman for useful discussions. I am also
grateful to Sergey Mozgovoy for pointing my attention to his paper on Kac's conjecture \cite{M}.

\section{Preliminaries on Cohomological Hall algebras}
\label{s:preliminaries}

In this Section we recall some definitions and results from \cite{KS}, Section 2.

\subsection{COHA of a quiver}
\label{ss:COHA_definition}

Let $Q$ be a finite quiver. Denote its set of vertices by $I,$ and let $a_{ij}\in\Z_{\geq 0}$ be the number of arrows from $i$ to $j,$
where $i,j\in I.$ Fix a dimension vector $\gamma=(\gamma^i)_{i\in I}\in\Z_{\geq 0}^I.$ We have an affine variety of representations of $Q$
in complex coordinate vector spaces $\C^{\gamma^i}:$
$$M_{\gamma}=\prod\limits_{i,j\in I}\C^{a_{ij}\gamma^i\gamma^j}.$$
The variety $M_{\gamma}$ is acted via conjugation by the complex algebraic group
$G_{\gamma}=\prod\limits_{i\in I}GL(\gamma^i,\C).$

Recall that infinite-dimensional Grammanian
$$\Gr(d,\infty)=\lim\limits_{\to}Gr(d,\C^n),\quad n\to+\infty,$$
is a model for the classifying space of $GL(d,\C).$ Put
$$\mrB G_{\gamma}:=\prod\limits_{i\in I}\mrB GL(\gamma^i,\C)=\prod\limits_{i\in I}\Gr(\gamma^i,\infty).$$
We have a standard universal $G_{\gamma}$-bundle $\mrE G_{\gamma}\to\mrB G_{\gamma},$ and the Artin stack $M_{\gamma}/G_{\gamma}$ gives a universal
family over $\mrB G_{\gamma}:$
$$M_{\gamma}^{univ}:=(\mrE G_{\gamma}\times M_{\gamma})/G_{\gamma}\to \mrE G_{\gamma}/G_{\gamma}=\mrB G_{\gamma}.$$

Define a $\Z_{\geq 0}^I$-graded $\Q$-vector space
$$\cH=\bigoplus_{\gamma\in\Z_{\geq 0}^I}\cH_{\gamma},$$
putting
$$\cH_{\gamma}:=H^{\cdot}_{G_{\gamma}}(M_{\gamma},\Q)=\bigoplus_{n\geq 0}H^n(M_{\gamma}^{univ},\Q).$$

Now we define a multiplication on $\cH$ which makes it into associative unital $\Z_{\geq 0}^I$-graded algebra over $\Q.$
Take two vectors $\gamma_1,\gamma_2\in \Z_{\geq 0}^I,$ and put $\gamma:=\gamma_1+\gamma_2.$ Consider the affine subspace $M_{\gamma_1,\gamma_2}\subset M_{\gamma},$ which consists of representations for which standard subspaces $\C^{\gamma_1^i}\subset \C^{\gamma^i}$ form a
subrepresentation. The subspace $M_{\gamma_1,\gamma_2}$ is preserved by the action of the subgroup $G_{\gamma_1,\gamma_2}\subset G_{\gamma}$
which consists of transformations preserving subspaces $\C^{\gamma_1^i}\subset\C^{\gamma^i}.$ We use a model for $\mrB G_{\gamma_1,\gamma_2}$
which is the total space of a bundle over $\mrB G_{\gamma}$ with fiber $G_{\gamma}/G_{\gamma_1,\gamma_2}$ (i.e. a product of infinite-dimensional partial flag varieties $\Fl(\gamma_1^i,\gamma_i,\infty)$). We have natural projection $\mrE G_{\gamma}\to \mrB G_{\gamma_1,\gamma_2}$ which is a universal
$G_{\gamma_1,\gamma_2}$-bundle.

Now define the morphism
$$m_{\gamma_1,\gamma_2}:\cH_{\gamma_1}\otimes\cH_{\gamma_2}\to \cH_{\gamma}$$
as the composition of the K\"unneth isomorphism
$$\otimes:H_{G_{\gamma_1}}^{\cdot}(M_{\gamma_1,\Q})\otimes H_{G_{\gamma_2}}^{\cdot}(M_2,\Q)\stackrel{\cong}{\to} H_{G_{\gamma_1\times G_{\gamma_2}}}^{\cdot}(M_{\gamma_1}\times M_{\gamma_2},\Q)$$
and the following morphisms:
$$H_{G_{\gamma_1\times G_{\gamma_2}}}^{\cdot}(M_{\gamma_1}\times M_{\gamma_2},\Q)\stackrel{\cong}{\to}H_{G_{\gamma_1,G_{\gamma_2}}}^{\cdot}
(M_{\gamma_1,\gamma_2},\Q)\to H^{\cdot+2c_1}_{G_{\gamma_1,\gamma_2}}(M_{\gamma},\Q)\to H^{\cdot+2c_1+2c_2}_{G_{\gamma}}(M_{\gamma}).$$
Here the first map is induced by natural surjective homotopy equivalences $$M_{\gamma_1,\gamma_2}\stackrel{\sim}{\to}M_{\gamma_1}\times M_{\gamma_2}, G_{\gamma_1,\gamma_2}\to G_{\gamma_1}\times G_{\gamma_2}.$$
The other two maps are natural pushforward morphisms, with
$$c_1=\dim_{\C} M_{\gamma}-\dim_{\C} M_{\gamma_1,\gamma_2},\quad c_2=\dim_{\C}G_{\gamma_1,\gamma_2}-\dim_{\C}G_{\gamma}.$$

\begin{theo}(\cite{KS}, Theorem 1) The constructed product $m$ on $\cH$ is associative.\end{theo}

Note that \begin{equation}\label{Euler}c_1+c_2=-\chi_Q(\gamma_1,\gamma_2),\end{equation} where
$$\chi_Q(\gamma_1,\gamma_2)=\sum\limits_{i\in I}\gamma_1^i\gamma_2^i-\sum\limits_{i,j\in I}a_{ij}\gamma_1^i\gamma_2^j$$
is the Euler form of the quiver $Q.$ That is, given two representations $R_1,R_2$ (over any field) of the quiver $Q,$ with dimension vectors $\gamma_1,\gamma_2$ respectively, one has
$$\sum\limits_i(-1)^i\dim\Ext^i(R_1,R_2)=\dim\Hom(R_1,R_2)-\dim\Ext^1(R_1,R_2)=\chi_Q(\gamma_1,\gamma_2).$$

\subsection{Explicit description of COHA of a quiver}
\label{ss:explicit}

Since the affine spaces $M_{\gamma}$ are $G_{\gamma}$-equivariantly contractible, we have natural isomorphisms
$$\cH_{\gamma}\cong H^{\cdot}(\mrB G_{\gamma},\Q)=\bigotimes_{i\in I}H^{\cdot}(\mrB GL(\gamma^i,\C),\Q).$$
Recall that
$$H^{\cdot}(\mrB GL(d,\C),\Q)\cong \Q[x_1,\dots,x_d]^{S_d}.$$
For a vector $\gamma\in\Z_{\geq 0}^{I},$ introduce variables $x_{i,\alpha},$ where $i\in I,$ $\alpha\in \{1,\dots,\gamma^i\}.$
Then, we get natural isomorphisms
$$\cH_{\gamma}\cong\Q[\{x_{i,\alpha}\}_{i\in I, \alpha\in \{1,\dots,\gamma^i\}}]^{\prod\limits_{i\in I}S_{\gamma^i}}.$$
From this moment, we identify the elements of $\cH_{\gamma}$ with the corresponding polynomials.

\begin{theo}\label{explicit}(\cite{KS}, Theorem 2)
Given two polynomials $f_1\in \cH_{\gamma_1},$ $f_2\in\cH_{\gamma_2},$ their product $f_1\cdot f_2\in \cH_{\gamma},$ $\gamma=\gamma_1+\gamma_2,$ equals to the sum over all shuffles (for any $i\in I$) of the following rational function in variables $(x_{i,\alpha}')_{i\in I,\alpha\in \{1,\dots,\gamma_1^i\}},$
$(x_{i,\alpha}'')_{i\in I,\alpha\in \{1,\dots,\gamma_2^i\}}:$
$$f_1((x_{i,\alpha}'))f_2((x_{i,\alpha}''))
\frac{\prod\limits_{i,j\in I}\prod\limits_{\alpha_1=1}^{\gamma_1^i}\prod\limits_{\alpha_2=1}^{\gamma_2^j}(x_{j,\alpha_2}''-x_{i,\alpha_1}')^{a_{ij}}}
{\prod\limits_{i\in I}\prod\limits_{\alpha_1=1}^{\gamma_1^i}\prod\limits_{\alpha_2=1}^{\gamma_2^i}(x_{i,\alpha_2}''-x_{i,\alpha_1}')}.$$
\end{theo}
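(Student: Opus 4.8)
The plan is to compute the three-step composition defining $m_{\gamma_1,\gamma_2}$ one map at a time, using the identification $\cH_{\gamma}\cong H^{\cdot}(\mrB G_{\gamma},\Q)\cong \Q[\{x_{i,\alpha}\}]^{\prod_{i}S_{\gamma^i}}$, under which $x_{i,\alpha}$ are the Chern roots of the universal bundle over $\mrB GL(\gamma^i,\C)$. The K\"unneth isomorphism identifies $\cH_{\gamma_1}\otimes\cH_{\gamma_2}$ with $H^{\cdot}(\mrB(G_{\gamma_1}\times G_{\gamma_2}),\Q)$, and since $G_{\gamma_1,\gamma_2}$ deformation retracts onto its Levi quotient $G_{\gamma_1}\times G_{\gamma_2}$, the subsequent restriction isomorphism identifies this with $H^{\cdot}(\mrB G_{\gamma_1,\gamma_2},\Q)$. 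On Chern roots this simply splits the variables into the primed family $(x'_{i,\alpha})$, $\alpha\le\gamma_1^i$, coming from the sub-flag, and the double-primed family $(x''_{i,\alpha})$, $\alpha\le\gamma_2^i$, coming from the quotient, so that
$$H^{\cdot}(\mrB G_{\gamma_1,\gamma_2},\Q)\cong \Q[\{x'_{i,\alpha}\},\{x''_{i,\alpha}\}]^{\prod_{i\in I}(S_{\gamma_1^i}\times S_{\gamma_2^i})},\qquad f_1\otimes f_2\mapsto f_1((x'))f_2((x'')).$$
Everything then reduces to computing the two pushforward maps explicitly on these polynomials.

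First I would treat the pushforward $H^{\cdot}_{G_{\gamma_1,\gamma_2}}(M_{\gamma_1,\gamma_2},\Q)\to H^{\cdot+2c_1}_{G_{\gamma_1,\gamma_2}}(M_{\gamma},\Q)$ associated to the closed $G_{\gamma_1,\gamma_2}$-equivariant embedding of linear spaces $M_{\gamma_1,\gamma_2}\hookrightarrow M_{\gamma}$. As both are equivariantly contractible, this Gysin map is multiplication by the $G_{\gamma_1,\gamma_2}$-equivariant Euler class of the normal representation $N=M_{\gamma}/M_{\gamma_1,\gamma_2}$. Writing each arrow map in block form with respect to $\C^{\gamma^i}=\C^{\gamma_1^i}\oplus\C^{\gamma_2^i}$, the subrepresentation condition kills precisely the block $\C^{\gamma_1^i}\to\C^{\gamma_2^j}$, so that
$$N\cong\bigoplus_{i,j\in I}\Hom(\C^{\gamma_1^i},\C^{\gamma_2^j})^{\oplus a_{ij}},\qquad \Eu_{G_{\gamma_1,\gamma_2}}(N)=\prod_{i,j\in I}\prod_{\alpha_1=1}^{\gamma_1^i}\prod_{\alpha_2=1}^{\gamma_2^j}(x''_{j,\alpha_2}-x'_{i,\alpha_1})^{a_{ij}},$$
which is exactly the numerator of the claimed formula.

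Next I would compute the change-of-group pushforward $H^{\cdot}_{G_{\gamma_1,\gamma_2}}(M_{\gamma},\Q)\to H^{\cdot+2c_2}_{G_{\gamma}}(M_{\gamma},\Q)$, which after discarding the contractible $M_{\gamma}$ is integration over the fiber of $\pi\colon\mrB G_{\gamma_1,\gamma_2}\to\mrB G_{\gamma}$, a bundle with fiber the product of Grassmannians $G_{\gamma}/G_{\gamma_1,\gamma_2}=\prod_{i\in I}\Gr(\gamma_1^i,\gamma^i)$. Its relative tangent bundle is $\bigoplus_i\Hom(S_i,Q_i)$, with $S_i,Q_i$ the tautological sub and quotient bundles carrying Chern roots $(x'_{i,\alpha})$ and $(x''_{i,\alpha})$, so the relative Euler class is $\prod_i\prod_{\alpha_1,\alpha_2}(x''_{i,\alpha_2}-x'_{i,\alpha_1})$, the denominator of the formula. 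The substance of this step is the explicit Gysin formula for a Grassmannian bundle: for $g$ symmetric under $\prod_i(S_{\gamma_1^i}\times S_{\gamma_2^i})$,
$$\pi_*(g)=\sum_{\text{shuffles}}\frac{g}{\prod_{i\in I}\prod_{\alpha_1=1}^{\gamma_1^i}\prod_{\alpha_2=1}^{\gamma_2^i}(x''_{i,\alpha_2}-x'_{i,\alpha_1})},$$
the sum running over minimal coset representatives of $\prod_i S_{\gamma^i}/(S_{\gamma_1^i}\times S_{\gamma_2^i})$, i.e. over all shuffles of the two variable families for each $i$. Feeding the output of the first pushforward, namely $f_1((x'))f_2((x''))\cdot\Eu_{G_{\gamma_1,\gamma_2}}(N)$, into this formula yields precisely the asserted shuffle sum.

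The main obstacle is this last step, the Gysin formula for the Grassmannian (or partial flag) bundle. I would establish it by Atiyah--Bott localization along the fiber: the torus fixed points are the coordinate $\gamma_1^i$-planes indexed by the shuffles, the restriction $g|_{\mathrm{pt}}$ specializes the primed and double-primed variables according to the chosen subset, and the equivariant Euler class of the tangent space at each fixed point is exactly the denominator above. An alternative is to reduce to the complete flag bundle, where $\pi_*$ factors as an iterated sequence of $\PP^1$-bundle integrations, each given by a divided-difference operator $f\mapsto (f-s\,f)/(x-y)$, and then to check that summing the resulting expression over the coset $S_{\gamma^i}/(S_{\gamma_1^i}\times S_{\gamma_2^i})$ collapses to the single shuffle sum. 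Along the way one verifies the degree bookkeeping $c_1+c_2=-\chi_Q(\gamma_1,\gamma_2)$ of \eqref{Euler}, consistent with the shift in \eqref{shift}, which follows at once from $\dim_{\C}N=\sum_{i,j}a_{ij}\gamma_1^i\gamma_2^j$ and $\dim_{\C}(G_{\gamma}/G_{\gamma_1,\gamma_2})=\sum_i\gamma_1^i\gamma_2^i$.
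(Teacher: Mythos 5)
Your argument is correct and is essentially the standard one: identifying the first pushforward with multiplication by the equivariant Euler class of the normal space $\bigoplus_{i,j}\Hom(\C^{\gamma_1^i},\C^{\gamma_2^j})^{\oplus a_{ij}}$, and the second with fiber integration over the Grassmannian bundle via the shuffle/localization formula, together with the consistency check $c_1+c_2=-\chi_Q(\gamma_1,\gamma_2)$. Note that the paper itself gives no proof of this statement --- it is quoted verbatim as Theorem 2 of \cite{KS} --- and your reconstruction matches the computation carried out there.
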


\subsection{Additional grading in the symmetric case}
\label{ss:grading}

Now assume that the quiver $Q$ is symmetric, i.e. $a_{ij}=a_{ji},$ $i,j\in I.$ Then the Euler form
$$\chi_Q(\gamma_1,\gamma_2)=\sum\limits_{i\in I}\gamma_1^i\gamma_2^i-\sum\limits_{i,j\in I}a_{ij}\gamma_1^i\gamma_2^i$$
is symmetric as well.

We make $\cH$ into a $(\Z_{\geq 0}^I\times\Z)$-graded algebra as follows. For a polynomial $f\in \cH_{\gamma}$ of degree $k$
we define its bigrading to be $(\gamma,2k+\chi_Q(\gamma,\gamma)).$ It follows from either \eqref{Euler} or Theorem
\ref{explicit} that the product on $\cH$ is compatible with this bigrading.
Define the super-structure on $\cH$ to be induced by $\Z$-grading.

For two elements $a_{\gamma,k}\in\cH_{\gamma,k},$ $a_{\gamma',k'}\in\cH_{\gamma',k'},$ we have
$$a_{\gamma,k}a_{\gamma',k'}=(-1)^{\chi_Q(\gamma,\gamma')}a_{\gamma',k'}a_{\gamma,k}.$$
In general, this does not mean that $\cH$ is super-commutative. However, it is easy to twist the product by a sign, so that $\cH$ becomes super-commutative.
This can be done as follows.

Define the homomorphism of abelian groups $\epsilon:\Z^I\to \Z/2\Z$ by the formula
$$\epsilon(\gamma)=\chi_Q(\gamma,\gamma)\text{ mod }2.$$
Note that the parity of the element $a_{\gamma,k}$ equals to $\epsilon(\gamma)$ (by the definition).
We have a bilinear form
$$\Z^I\times\Z^I\to\Z/2\Z,\quad (\gamma_1,\gamma_2)\mapsto (\chi_Q(\gamma_1,\gamma_2)+\epsilon(\gamma_1)\epsilon(\gamma_2))\text{ mod }2,$$
which induces a symmetric form $\beta$ on the space $(\Z/2\Z)^I,$ such that $\beta(\gamma,\gamma)=0$ for all $\gamma\in (\Z/2\Z)^I.$
Hence, there exists a bilinear form $\psi$ on $(\Z/2\Z)^I$ such that
$$\psi(\gamma_1,\gamma_2)+\psi(\gamma_2,\gamma_1)=\beta(\gamma_1,\gamma_2).$$
Then the twisted product on $\cH$ is defined by the formula
$$a_{\gamma,k}\star a_{\gamma',k'}=(-1)^{\psi(\gamma,\gamma')}a_{\gamma,k}\cdot a_{\gamma',k'}.$$

It follows from the definition that the product $\star$ is associative, and the algebra $(\cH,\star)$ is super-commutative. From now on, we fix the choice
of bilinear form $\psi,$ and the corresponding product $\star$ on $\cH.$

\section{Freeness of COHA of a symmetric quiver}
\label{s:main_results}

\begin{theo}\label{free_algebra}For any finite symmetric quiver $Q,$ the $(\Z_{\geq 0}^I\times\Z)$-graded algebra $(\cH,\star)$
is a free super-commutative algebra generated by a $(\Z_{\geq 0}^I\times\Z)$-graded vector space $V$
of the form $V=V^{prim}\otimes\Q[x],$ where $x$ is a variable of bidegree $(0,2)\in\Z_{\geq 0}^I\times\Z,$
and for any $\gamma\in\Z_{\geq 0}^I$ the space $V_{\gamma,k}^{prim}$ is non-zero (and finite-dimensional) only for finitely many $k\in\Z.$\end{theo}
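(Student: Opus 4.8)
The plan is to establish freeness of $(\cH,\star)$ by exhibiting the right structural decomposition and then controlling it via a carefully chosen filtration or specialization that lets us compute the graded pieces. Because the algebra is super-commutative and $(\Z_{\geq 0}^I\times\Z)$-graded with finite-dimensional components, the natural strategy is to prove that its (bigraded) Poincar\'e series factors as a product matching a free super-commutative algebra on a generating space $V$, and then to recognize the $\Q[x]$-module structure inside $V$. Concretely, I would first observe that $\cH_{(0,\cdot)}$ is the cohomology of a point's classifying data, hence $\cH_{0,\cdot}\cong\Q[x]$ with $x$ of bidegree $(0,2)$; multiplication by the class $x\in\cH_{0,2}$ makes all of $\cH$ into a $\Q[x]$-module, and this $x$-action is what produces the tensor factor $\Q[x]$ in $V=V^{prim}\otimes\Q[x]$.

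\textbf{Reduction to a rank-one statement.} The key reduction is to prove that $\cH$, viewed as a super-commutative $\Q[x]$-algebra, is \emph{free as a $\Q[x]$-module} on each graded piece, with the generators assembling into the primitive space $V^{prim}$. The explicit shuffle formula from Theorem \ref{explicit} is the essential computational tool here: it shows that the product $f_1\cdot f_2$ is a symmetrization of an explicit rational function, and in particular it lets one estimate cohomological degrees. I would use the bidegree bookkeeping (a polynomial of degree $k$ in $\cH_\gamma$ sits in bidegree $(\gamma,2k+\chi_Q(\gamma,\gamma))$) to show that for fixed $\gamma$ the nonzero bigraded pieces $\cH_{\gamma,k}$ are bounded below in $k$ (indeed $k\geq\chi_Q(\gamma,\gamma)$, since polynomial degrees are nonnegative), which forces $V^{prim}_{\gamma,k}$ to vanish for $k<\chi_Q(\gamma,\gamma)$ and gives the lower bound anticipated in Theorem \ref{upper_bound_intro}.

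\textbf{Computing the generating function.} The heart of the argument is to match Poincar\'e series. For a free super-commutative algebra on a space $V=\bigoplus V_{\gamma,k}$, the generating function in formal variables $(z^i)_{i\in I}$ (tracking $\gamma$) and $q$ (tracking $k$) is a product of factors $(1\pm z^\gamma q^k)^{\mp\dim V_{\gamma,k}}$, with signs dictated by the parity $\epsilon(\gamma)$. I would compute the Poincar\'e series of $\cH$ directly using $\cH_\gamma\cong\Q[x_{i,\alpha}]^{\prod_i S_{\gamma^i}}$, so that each $\cH_\gamma$ has the explicit Poincar\'e polynomial $\prod_i\prod_{\alpha=1}^{\gamma^i}(1-q^{2\alpha})^{-1}$ suitably shifted by $q^{\chi_Q(\gamma,\gamma)}$. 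Dividing out the contribution of $\Q[x]$ (the factor $(1-q^2)^{-1}$ coming from the $x$-variable) should leave a series with \emph{nonnegative} integer coefficients that is the generating function of $V^{prim}$; establishing this nonnegativity is precisely what proves freeness and simultaneously delivers the positivity of quantum Donaldson--Thomas invariants.

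\textbf{The main obstacle.} The genuinely hard step is proving \emph{freeness} itself, not merely matching dimensions: a priori the abstract matching of Poincar\'e series only shows the graded dimensions are consistent with freeness, so I must produce an actual isomorphism from a free algebra. I expect the decisive technical input to be an analysis of the shuffle product showing that $\cH$ has no unexpected relations — equivalently, that the natural map from the free super-commutative algebra on a space of generators is injective. I anticipate handling this by an inductive argument on the height $\sum_i\gamma^i$ of the dimension vector, using the associativity of $\star$ together with a factorization/coproduct structure (the correspondences parameterizing sub-representations dualize to a coproduct, making $\cH$ a bialgebra), and then invoking a Milnor--Moore / Poincar\'e--Birkhoff--Witt-type theorem for super-commutative, cocommutative, connected bigraded Hopf algebras over $\Q$ to conclude that $\cH$ is free on its primitives. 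Verifying the bialgebra axioms and the connectivity/finiteness hypotheses needed to apply such a structure theorem — in particular controlling the $\Q[x]$-action so that the primitives organize as $V^{prim}\otimes\Q[x]$ rather than an uncontrolled module — is where the real work lies.
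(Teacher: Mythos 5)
Your proposal is a plan rather than a proof, and it has two concrete problems. First, a factual error at the outset: $\cH_{0,\cdot}$ is \emph{not} $\Q[x]$. For $\gamma=0$ the group $G_0$ is trivial and $M_0$ is a point, so $\cH_0=H^{\cdot}(\mathrm{pt},\Q)=\Q$, concentrated in bidegree $(0,0)$; in particular $\cH_{0,2}=0$ and there is no class $x\in\cH_{0,2}$ whose multiplication could furnish the $\Q[x]$-module structure you want. The factor $\Q[x]$ in $V=V^{prim}\otimes\Q[x]$ is formal bookkeeping for the graded structure of $V$; in the paper it is realized \emph{inside each} $\cH_{\gamma}$ by the decomposition $\cH_{\gamma}=\cH_{\gamma}^{prim}\otimes\Q[\sigma_{\gamma}]$, where $\sigma_{\gamma}=\sum_{i,\alpha}x_{i,\alpha}$ and the tensor factor is ordinary polynomial multiplication by $\sigma_{\gamma}$, not the COHA product with anything in $\cH_0$.

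Second, and more seriously, the step you yourself identify as decisive --- endowing $\cH$ with a compatible cocommutative coproduct and invoking a Milnor--Moore/Leray-type structure theorem --- is asserted, not established, and it is not available: the correspondences do not obviously dualize to a coproduct satisfying the bialgebra axiom with the shuffle product, and no such Hopf structure is constructed in \cite{KS} or in this paper. This is precisely the reason the paper's proof is entirely different and hands-on: it defines $J_{\gamma}$ as the $P_{\gamma}$-stable fractional ideal generated by the explicit shuffle factors for all decompositions $\gamma=\gamma_1+\gamma_2$, takes $V_{\gamma}^{prim}$ to be a graded complement of $J_{\gamma}^{P_{\gamma}}$ in $\cH_{\gamma}^{prim}$ (generation is then immediate by induction on $\sum_i\gamma^i$, since the image of multiplication from lower pieces is exactly $J_{\gamma}^{P_{\gamma}}\otimes\Q[\sigma_{\gamma}]$), proves finite-dimensionality by showing $\supp(A_{\gamma}^{prim}/J_{\gamma})=\{0\}$, and proves freeness by a lexicographic leading-term argument after localizing at the differences $x^{(q)}_{j,\alpha_2}-x^{(p)}_{i,\alpha_1}$ and checking these are non-zero-divisors modulo $\widetilde{J_{\gamma}}$. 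Your correct observations --- that Poincar\'e series matching alone cannot prove freeness, and that the lower bound $k\geq\chi_Q(\gamma,\gamma)$ follows from nonnegativity of polynomial degrees --- do not substitute for this missing core argument, and the nonnegativity of the quotient series that you ``expect'' is exactly the content of the theorem, not an input to it.
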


\begin{proof}Our first step is to construct the space $V.$ It will be convenient to treat $\cH_{\gamma}$ itself as a $\Z$-graded algebra (with the usual multiplication of polynomials, and the standard even grading). To distinguish the product in $\cH_{\gamma}$ and the product in $\cH,$ we will always
denote the last product by $"\star "$.

For convenience, we put $A_{\gamma}:=\Q[\{x_{i,\alpha}\}_{i\in I,1\leq \alpha\leq \gamma^i}]$ (considered as a $\Z$-graded algebra) and $P_{\gamma}:=\prod\limits_{i\in I}S_{\gamma^i}.$ Then
we have that $\cH_{\gamma}=A_{\gamma}^{P_{\gamma}}.$ Further, put
$$A_{\gamma}^{prim}:=\Q[(x_{j,\alpha_2}-x_{i,\alpha_1})_{i,j\in I,1\leq \alpha_1\leq \gamma^i,1\leq\alpha_2\leq \gamma^j}],\quad
\sigma_{\gamma}:=\sum\limits_{\substack{i\in I,\\ 1\leq \alpha\leq\gamma^i}}x_{i,\alpha}\in A_{\gamma}.$$
Then $A_{\gamma}=A_{\gamma}^{prim}\otimes\Q[\sigma_{\gamma}].$ Further, we have
$$\cH_{\gamma}=\cH_{\gamma}^{prim}\otimes\Q[\sigma_{\gamma}],\quad\cH_{\gamma}^{prim}:=(A_{\gamma}^{prim})^{P_{\gamma}}.$$

Now, for each $\gamma\in\Z_{\geq 0}^I,$ denote by $J_{\gamma}$ the smallest $P_{\gamma}$-stable $A_{\gamma}^{prim}$-submodule of the localization
$A_{\gamma}^{prim}[(x_{i,\alpha_2}-x_{i,\alpha_1})^{-1}_{i\in I,1\leq \alpha_1<\alpha_2\leq \gamma^i}],$ such that for all decompositions $\gamma=\gamma_1+\gamma_2,$ $\gamma_1,\gamma_2\in\Z_{\geq 0}^I\setminus \{0\},$ we have that
$$\frac{\prod\limits_{i,j\in I}\prod\limits_{\alpha_1=1}^{\gamma_1^i}\prod\limits_{\alpha_2=\gamma_1^j+1}^{\gamma^j}(x_{j,\alpha_2}-x_{i,\alpha_1})^{a_{ij}}}
{\prod\limits_{i\in I}\prod\limits_{\alpha_1=1}^{\gamma_1^i}\prod\limits_{\alpha_2=\gamma_1^i+1}^{\gamma^i}(x_{i,\alpha_2}-x_{i,\alpha_1})}\in J_{\gamma}.$$

Clearly, $J_{\gamma}^{P_{\gamma}}\subset\cH_{\gamma}^{prim}.$ Define $V_{\gamma}^{prim}\subset\cH_{\gamma}^{prim}$ to be a graded subspace
such that $$\cH_{\gamma}^{prim}=V_{\gamma}^{prim}\oplus J_{\gamma}^{P_{\gamma}}.$$
Further, put $$V_{\gamma}:=V_{\gamma}^{prim}\otimes\Q[\sigma_{\gamma}]\subset \cH_{\gamma},\quad V:=\bigoplus\limits_{\gamma\in\Z_{\geq 0}^I}V_{\gamma}.$$
We will prove that $V$ freely generates $\cH,$ and that all the spaces $V_{\gamma}^{prim}$ are finite-dimensional (this would imply the Theorem).

\begin{lemma}The subspace $V\subset\cH$ generates $\cH$ as an algebra.\end{lemma}

\begin{proof}Indeed, for each $\gamma\in\Z_{\geq 0}^I,$ the image of the multiplication map
$$\bigoplus\limits_{\substack{\gamma_1+\gamma_2=\gamma,\\ \gamma_1,\gamma_2\in\Z_{\geq 0}^I\setminus\{0\}}}\cH_{\gamma_1}\otimes\cH_{\gamma_2}\to\cH_{\gamma}$$
is precisely $J_{\gamma}^{P_{\gamma}}\otimes\Q[\sigma_{\gamma}]$ (this is straightforward to check). Hence, it follows by induction on $\sum\limits_{i\in I}\gamma^i$ that the subspace $\cH_{\gamma}$ is contained in the subalgebra generated by $V.$ This proves Lemma.\end{proof}

Now we will show that the spaces $V_{\gamma}^{prim}$ are finite-dimensional.

\begin{lemma}For each $\gamma\in\Z_{\geq 0}^I,$ the space $V_{\gamma}^{prim}$ is finite-dimensional.\end{lemma}

\begin{proof}In other words, we need to show that the ideal $J_{\gamma}^{P_{\gamma}}\subset\cH_{\gamma}^{prim}$
has finite codimension.
First note that if we replace $a_{ii}$ by $a_{ii}+1,$ then the fractional ideal $J_{\gamma}$ would become smaller or equal. Hence, we may and will assume,
that $a_{ii}>0$ for $i\in I,$ and so $J_{\gamma}\subset A_{\gamma}^{prim}.$

Since we have natural injective morphisms
$$\cH_{\gamma}^{prim}/J_{\gamma}^{P_{\gamma}}\hookrightarrow A_{\gamma}^{prim}/J_{\gamma},$$ it suffices to show that the ideal $J_{\gamma}\subset A_{\gamma}^{prim}$
has finite codimension. It will be convenient to treat the algebra $A_{\gamma}^{prim}$ as the algebra of functions
on the hyperplane $W\subset \A_{\Q}^{\sum\limits_{i\in I}\gamma^i},$ given by equation $\sigma_{\gamma}(x)=0.$

It suffices to show that
$$\supp(A_{\gamma}^{prim}/J_{\gamma})=\{0\}\subset W.$$
Assume the converse is true. Then there exists a point $y\in W_{\bar{\Q}},$ $y\ne 0,$ such that all the functions from $J_{\gamma}$
vanish at $y.$ Since $\sigma_{\gamma}(y)=0,$ we have that not all of the coordinates $y_{i,\alpha}$ are equal to each other. Since the ideal $J_{\gamma}$ is $P_{\gamma}$-stable, we may assume that there exists a decomposition $\gamma=\gamma_1+\gamma_2,$ $\gamma_1,\gamma_2\in\Z_{\geq 0}^I\setminus \{0\},$
such that
$$y_{i,\alpha_1}\ne y_{j,\alpha_2}\text{ for }1\leq \alpha_1\leq\gamma_1^i,\, \gamma_1^j+1\leq \alpha_2\leq \gamma^j.$$
But then the function
$$\frac{\prod\limits_{i,j\in I}\prod\limits_{\alpha_1=1}^{\gamma_1^i}\prod\limits_{\alpha_2=\gamma_1^j+1}^{\gamma^j}(x_{j,\alpha_2}-x_{i,\alpha_1})^{a_{ij}}}
{\prod\limits_{i\in I}\prod\limits_{\alpha_1=1}^{\gamma_1^i}\prod\limits_{\alpha_2=\gamma_1^i+1}^{\gamma^i}(x_{i,\alpha_2}-x_{i,\alpha_1})}\in J_{\gamma}$$
does not vanish at $y,$ a contradiction.

Lemma is proved.
\end{proof}

It remains to prove the freeness.

\begin{lemma}The subspace $V\subset \cH$ freely generates $\cH.$\end{lemma}

\begin{proof}We have already shown the generation. So we need to show freeness.

Choose an order on $I,$ and fix the corresponding lexicographical order on $\Z_{\geq 0}^I$ (denoted by $\gamma\succeq\gamma'$).
Further, denote by $e_{\gamma,\beta},$ $1\leq \beta\leq \dim V_{\gamma}^{prim},$ a homogeneous basis of $V_{\gamma}^{prim}.$
We have the lexicographical order on all of the elements $e_{\gamma,\beta}$ (for all $\gamma$ and $\beta$). Further, the elements $e_{\gamma,\beta}\sigma_{\gamma}^m$ (for all $\gamma,\beta,m$) form the basis of $V,$ and again we have a lexicographical order on them, which we
denote by $\succeq.$

Fix some $\gamma\in\Z_{\geq 0}^I.$ Consider the set $Seq_{\gamma}$ of all {\it nonincreasing} sequences $(e_{\gamma_1,\beta_1}\sigma_{\gamma_1}^{m_1},\dots,e_{\gamma_d,\beta_d}\sigma_{\gamma_d}^{m_d})$ such that

1) $\gamma_1+\dots+\gamma_d=\gamma;$

2) an equality $(\gamma_i,\beta_i,m_i)=(\gamma_{i+1},\beta_{i+1},m_{i+1})$ implies $\epsilon(\gamma_i)=0.$

Clearly, we have natural lexicographical order on $Seq_{\gamma}$ (which we again denote by $\succeq$). For a sequence $t\in Seq_{\gamma},$
we denote by $M_t\in\cH_{\gamma}$ the corresponding product.

What we need is to show non-vanishing of each non-trivial linear combination:
\begin{equation}\label{lin_comb}T=\sum\limits_{i=1}^n\lambda_iM_{t_i}\ne 0,\quad t_1,\dots,t_n\in Seq_{\gamma},\, t_1\succ\dots\succ t_n,\, \lambda_1\dots\lambda_n\ne 0.\end{equation}

Fix some $t_i$ and $\lambda_i$ as in \eqref{lin_comb}. Denote by $(\gamma_1,\dots,\gamma_k)$ the underlying sequence of elements in $\Z_{\geq 0}^I$ for the sequence $t_1\in Seq_{\gamma}.$ Then $\gamma_1+\dots+\gamma_k=\gamma,$ and $\gamma_i\ne 0.$ We have natural isomorphism
$$A_{\gamma}\cong A_{\gamma_1}\otimes\dots\otimes A_{\gamma_k}=:\widetilde{A_{\gamma}},$$ which induces an inclusion
$$\iota:\cH_{\gamma}\hookrightarrow \cH_{\gamma_1}\otimes\dots\otimes\cH_{\gamma_k}=:\widetilde{\cH_{\gamma}}.$$
Put $\widetilde{P_{\gamma}}:=P_{\gamma_1}\times\dots\times P_{\gamma_k}.$ Then we have $\widetilde{\cH_{\gamma}}=\widetilde{A_{\gamma}}^{\widetilde{P_{\gamma}}}.$ Further, take an ideal
$$(J_{\gamma_1} \cap A_{\gamma_1}^{prim}) \widetilde{A_{\gamma}}+\dots+(J_{\gamma_k}\cap A_{\gamma_k}^{prim})\widetilde{A_{\gamma}}=:\widetilde{J_{\gamma}}\subset \widetilde{A_{\gamma}}.$$
We will write $x^{(p)}_{i,\alpha}\in\widetilde{A_{\gamma}}$ for variables from the $p$-th factor $A_{\gamma_p}\subset \widetilde{A_{\gamma}}.$

{\noindent {\bf Claim.}} {\it The elements $(x^{(q)}_{j,\alpha_2}-x^{(p)}_{i,\alpha_1})\in \widetilde{A_{\gamma}},$
$1\leq p<q\leq k,$ are not zero divisors in the quotient ring
$$\widetilde{A_{\gamma}}/\widetilde{J_{\gamma}}.$$}
\begin{proof}For convenience, we may assume that the sequence $\gamma_1,\dots,\gamma_k$ is not necessarily non-increasing, and $q=k.$
Any element $g\in \widetilde{A_{\gamma}}$ can be written (in a unique way) as a sum
$$g=\sum\limits_{\nu=0}^N g_{\nu}\sigma_{\gamma_k}^{\nu},\quad g_{\nu}\in A_{\gamma_1}\otimes\dots\otimes A_{\gamma_{k-1}}\otimes A_{\gamma_k}^{prim}.$$

The following are obviously equivalent:

$(i)$ $g\not\in \widetilde{J_{\gamma}};$

$(ii)$ for some $\nu\in\{0,\dots,N\},$ $g_{\nu}\not\in\widetilde{J_{\gamma}}.$

Now suppose that $g\not\in\widetilde{J_{\gamma}}.$ We need to show that
\begin{equation}\label{not_in_ideal2}(x^{(k)}_{j,\alpha_2}-x^{(p)}_{i,\alpha_1})g\not\in\widetilde{J_{\gamma}}.\end{equation}
We may assume that $g_N\not\in \widetilde{J_{\gamma}}.$ Put
$$x^{(k)}_{av}:=\frac1{\sum\limits_{i\in I}\gamma_k^i}\sum\limits_{i,\alpha}x^{(k)}_{i,\alpha}=\frac1{\sum\limits_{i\in I}\gamma_k^i}\sigma_{\gamma_k}.$$
Then $x^{(k)}_{j,\alpha_2}-x^{(k)}_{av}\in A_{\gamma_k}^{prim},$ and we have
$$(x^{(k)}_{j,\alpha_2}-x^{(p)}_{i,\alpha_1})g=(x^{(k)}_{j,\alpha_2}-x^{(k)}_{av}-x^{(p)}_{i,\alpha_1})g+x^{(k)}_{av}g=\frac1{\sum\limits_{i\in I}\gamma_k^i}g_N\sigma_{\gamma_k}^{N+1}+
\sum\limits_{\nu=0}^Ng_{\nu}'\sigma_{\gamma_k}^{\nu}$$
for some $g_{\nu}'\in A_{\gamma_1}\otimes\dots\otimes A_{\gamma_{k-1}}\otimes A_{\gamma_k}^{prim}.$
Since $\frac1{\sum\limits_{i\in I}\gamma_k^i}g_N\not\in\widetilde{J_{\gamma}}$ by our assumption, this implies \eqref{not_in_ideal2}. Claim is proved.
\end{proof}

We put
$$\widetilde{A_{\gamma}}':=\widetilde{A_{\gamma}}[(x^{(q)}_{j,\alpha_2}-x^{(p)}_{i,\alpha_1})^{-1}_{1\leq p<q\leq k}],\quad \widetilde{\cH_{\gamma}}':=
(\widetilde{A_{\gamma}}')^{\widetilde{P_{\gamma}}}.$$
We denote by the same letter $L$ the localization maps $L:\widetilde{A_{\gamma}}\to \widetilde{A_{\gamma}}',$ $L:\widetilde{\cH_{\gamma}}\to \widetilde{\cH_{\gamma}}'.$ Also put $\widetilde{J_{\gamma}}':=\widetilde{A_{\gamma}}'L(\widetilde{J_{\gamma}}).$
It follows directly from Claim that the induced maps
\begin{equation}\label{injective}L:\widetilde{A_{\gamma}}/\widetilde{J_{\gamma}}\to \widetilde{A_{\gamma}}'/\widetilde{J_{\gamma}}',\quad
L:\widetilde{\cH_{\gamma}}/(\widetilde{J_{\gamma}})^{\widetilde{P_{\gamma}}}\to \widetilde{\cH_{\gamma}}'/(\widetilde{J_{\gamma}}')^P\end{equation}
are injective.

Now, let $r\in\{1,\dots,n\}$ be the maximal number such that the underlying sequence of elements in $\Z_{\geq 0}^I$ for $t_r$
coincides with $(\gamma_1,\dots,\gamma_k).$ Then it is straightforward to check that
$$L\iota(M_{t_l})\in (\widetilde{J_{\gamma}}')^{\widetilde{P_{\gamma}}}\text{ for }r+1\leq l\leq n.$$

Thus, it suffices to show that
\begin{equation}\label{not_in_ideal}L\iota(\sum\limits_{i=1}^r\lambda_iM_{t_i})\not\in(\widetilde{J_{\gamma}}')^{\widetilde{P_{\gamma}}}.\end{equation}
%To see this, note that we have natural isomorphisms of vector spaces:
%\begin{equation}\label{composition}\widetilde{\cH_{\gamma}}/\widetilde{J_{\gamma}}^{\widetilde{P_{\gamma}}}\stackrel{\sim}{\to} (\cH_{\gamma_1}/J_{\gamma_1}^{P_{\gamma_1}}[\sigma_{\gamma_1}])\otimes\dots\otimes(\cH_{\gamma_k}/J_{\gamma_k}^{P_{\gamma_k}}[\sigma_{\gamma_1}])\stackrel{\sim}{\to}V_{\gamma_1}\otimes\dots \otimes V_{\gamma_k}.\end{equation}
%Denote the composition in \eqref{composition} by $\Phi.$
For all relevant $\beta_i,m_i$ we have the following comparison:
\begin{multline}\label{ideal1}L\iota(e_{\gamma_1,\beta_1}\sigma_{\gamma_1}^{m_1}\star\dots\star e_{\gamma_k,\beta_k}\sigma_{\gamma_k}^{m_k})\equiv\\
F_{\gamma_1,\dots,\gamma_k}\cdot \sum\limits_{\tau}s(\tau)e_{\gamma_{1},\beta_{\tau(1)}}\sigma_{\gamma_{1}}^{m_{\tau(1)}}\otimes\dots\otimes e_{\gamma_{k},\beta_{\tau(k)}}\sigma_{\gamma_{k}}^{m_{\tau(k)}}\text{ mod }(\widetilde{J_{\gamma}}')^{\widetilde{P_{\gamma}}},\end{multline}
where the sum is taken over all permutations $\tau\in S_k$ such that $\gamma_p=\gamma_{\tau(p)}$ for all $p\in\{1,\dots,k\},$
and $s(\tau)$ is the Koszul sign (recall that the parity of $e_{\gamma,\beta}\sigma_{\gamma}^k$ equals to $\epsilon(\gamma)$),
and $F_{\gamma_1,\dots,\gamma_k}\in\widetilde{\cH_{\gamma}}'$ is (up to sign) the product of some powers (positive and ($-1$)-st) of the differences
$$(x^{(q)}_{j,\alpha_2}-x^{(p)}_{i,\alpha_1})\in \widetilde{A_{\gamma}},\quad 1\leq p<q\leq k.$$
Thus, $F_{\gamma_1,\dots,\gamma_k}$ is invertible, and according to \eqref{ideal1} and injectivity of maps \eqref{injective}, we are left to check that
$$\sum\limits_{\tau}s(\tau)e_{\gamma_{1},\beta_{\tau(1)}}\sigma_{\gamma_{1}}^{m_{\tau(1)}}\otimes\dots\otimes e_{\gamma_{k},\beta_{\tau(k)}}\sigma_{\gamma_{k}}^{m_{\tau(k)}}\not\in \widetilde{J_{\gamma}}^{\widetilde{P_{\gamma}}}.$$
But this follows from the condition 2) in the above definition of the set of sequences $Seq_{\gamma},$ and from the definition of $e_{\gamma_i,\beta}.$
This proves \eqref{not_in_ideal2}, hence the desired linear independence \eqref{lin_comb}, and hence free generation. Lemma is proved.
\end{proof}
Theorem is proved.
\end{proof}

It is clear that if $V_{\gamma,k}^{prim}\ne 0$ in the notation of the above Theorem, then $k\equiv \chi_Q(\gamma,\gamma)\text{ mod }2$ and $k\geq
\chi_Q(\gamma,\gamma).$ Our next result is an upper bound on $k$ (depending on $\gamma$) for which $V_{\gamma,k}\ne 0.$

For a given symmetric quiver $Q$ and $\gamma\in\Z_{\geq 0}^I\setminus\{0\},$ we put
$$N_{\gamma}(Q):=\frac12(\sum\limits_{\substack{i,j\in I,\\ i\ne j}}a_{ij}\gamma^i\gamma^j+\sum\limits_{i\in I}\max(a_{ii}-1,0)\gamma^i(\gamma^i-1))-\sum\limits_{i\in I}\gamma^i+2.$$

\begin{theo}\label{upper_bound}In the notation of Theorem \ref{free_algebra}, if $V_{\gamma,k}^{prim}\ne 0,$ then $\gamma\ne 0,$
$$k\equiv \chi_Q(\gamma,\gamma)\text{ mod }2,\quad\text{and}\quad \chi_Q(\gamma,\gamma)\leq k< \chi_Q(\gamma,\gamma)+2N_{\gamma}(Q).$$\end{theo}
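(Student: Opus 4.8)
The plan is to reduce the theorem to a degree bound for the ideals $J_{\gamma}$ and then prove it by induction on $n:=\sum_{i\in I}\gamma^i$, peeling off one variable at a time. First, recall that under $\cH_{\gamma}\cong A_{\gamma}^{P_{\gamma}}$ a class sits in bidegree $(\gamma,k)$ with $k=2d+\chi_Q(\gamma,\gamma)$, where $d$ is the ordinary polynomial degree; so the congruence $k\equiv\chi_Q(\gamma,\gamma)\bmod 2$ and the lower bound $k\geq\chi_Q(\gamma,\gamma)$ (i.e. $d\geq 0$) are automatic, and the real content is that $V_{\gamma}^{prim}$ is concentrated in polynomial degrees $d<N_{\gamma}(Q)$. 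Since $V_{\gamma}^{prim}\cong\cH_{\gamma}^{prim}/J_{\gamma}^{P_{\gamma}}=(A_{\gamma}^{prim})^{P_{\gamma}}/(J_{\gamma})^{P_{\gamma}}$ and the invariant functor is exact in characteristic zero, $V_{\gamma}^{prim}\cong(A_{\gamma}^{prim}/\bar J_{\gamma})^{P_{\gamma}}$ where $\bar J_{\gamma}:=J_{\gamma}\cap A_{\gamma}^{prim}$ is an honest homogeneous ideal (this disposes of the fractional-ideal issue when some $a_{ii}=0$). Writing $Q_{\gamma}:=A_{\gamma}^{prim}/\bar J_{\gamma}$ and letting $t_{\gamma}$ be its top nonzero degree, it suffices to prove $t_{\gamma}\leq N_{\gamma}(Q)-1$.

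The engine is a numerical identity. For $\gamma'=\gamma-e_i$ (with $\gamma^i\geq 1$) I would check directly from the formula for $N$ that
\[
N_{\gamma}(Q)=N_{\gamma'}(Q)+d_i-1,\qquad d_i:=\sum_{j\neq i}a_{ij}\gamma^j+\max(a_{ii}-1,0)(\gamma^i-1),
\]
which is just the backward discrete derivative of the quadratic form $\tfrac12(\sum_{i\neq j}a_{ij}\gamma^i\gamma^j+\sum_i\max(a_{ii}-1,0)\gamma^i(\gamma^i-1))$ in the variable $\gamma^i$, combined with the linear term. The point is that this same $d_i$ is exactly the polynomial degree of the element $\tilde f_{i,\gamma^i}:=\prod_{j\neq i}\prod_{\alpha=1}^{\gamma^j}(x_{i,\gamma^i}-x_{j,\alpha})^{a_{ij}}\cdot\prod_{\alpha=1}^{\gamma^i-1}(x_{i,\gamma^i}-x_{i,\alpha})^{\max(a_{ii}-1,0)}$, obtained from the generator $R_{\gamma-e_i,e_i}$ of $J_{\gamma}$ (which splits off the last copy of vertex $i$) by multiplying away the denominator in the case $a_{ii}=0$; since $J_{\gamma}$ is an $A_{\gamma}^{prim}$-module this multiplication keeps us inside $J_{\gamma}$, so $\tilde f_{i,\gamma^i}\in\bar J_{\gamma}$ in all cases.

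Now set $A_{\gamma}^{prim}=A_{\gamma'}^{prim}[u]$, where $u:=x_{i,\gamma^i}-x_{\mathrm{ref}}$ for a fixed reference variable $x_{\mathrm{ref}}$ of $\gamma'$ (available once $n\geq 2$; the base case $\gamma=e_i$ is trivial, as $Q_{e_i}=\Q$ and $N_{e_i}(Q)=1$). Each linear factor of $\tilde f_{i,\gamma^i}$ is $u$ plus a constant of $A_{\gamma'}^{prim}$, so $\tilde f_{i,\gamma^i}$ is monic of degree $d_i$ in $u$. Dividing a homogeneous $h\in A_{\gamma}^{prim}$ by it expresses $h$ modulo $\bar J_{\gamma}$ through $1,u,\dots,u^{d_i-1}$, with coefficients $h_p\in A_{\gamma'}^{prim}$ of degree $\deg h-p$. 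The goal is the module estimate $t_{\gamma}\leq(d_i-1)+t_{\gamma'}$: indeed, if $\deg h>(d_i-1)+t_{\gamma'}$ then every surviving coefficient $h_p$ has degree $>t_{\gamma'}$, hence $h_p\in\bar J_{\gamma'}$, and provided one knows $h_p u^p\in\bar J_{\gamma}$ one gets $h\in\bar J_{\gamma}$. Granting this, the identity above and the inductive hypothesis $t_{\gamma'}\leq N_{\gamma'}(Q)-1$ give $t_{\gamma}\leq(d_i-1)+(N_{\gamma'}(Q)-1)=N_{\gamma}(Q)-1$, closing the induction.

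The main obstacle is precisely the coefficient-reduction step $\bar J_{\gamma'}\cdot u^p\subseteq\bar J_{\gamma}$, which fails verbatim: the decomposition of $\gamma$ as $\gamma'+e_i$ only yields $R_{\gamma'_1+e_i,\gamma'_2}=R_{\gamma'_1,\gamma'_2}\cdot E$ inside $J_{\gamma}$, where $E$ is a product of cross-differences $(x_{l,\alpha}-x_{i,\gamma^i})^{\pm 1}$ between the split-off variable and the rest. The resolution is to invert all such cross-differences, exactly as in the proof of Theorem \ref{free_algebra}: there $E$ becomes invertible, so the generators of $J_{\gamma'}$ land in the localized $J_{\gamma}$, giving $h\in\bar J_{\gamma}$ after localization. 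One then descends to the unlocalized $Q_{\gamma}$ via the $k=2$ instance of the Claim in that proof (the cross-differences $x^{(q)}_{j,\alpha_2}-x^{(p)}_{i,\alpha_1}$ are non-zero-divisors on the relevant quotient), which makes the localization map injective on the graded pieces at hand. Arranging this descent so that it is compatible with the grading and therefore genuinely bounds $t_{\gamma}$, rather than merely the localized top degree, is the delicate point I expect to require the most care.
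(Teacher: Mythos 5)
Your reduction to a top-degree bound for $A_{\gamma}^{prim}$ modulo the ideal, the numerical recursion $N_{\gamma}(Q)=N_{\gamma-e_i}(Q)+d_i-1$, and the observation that $\tilde f_{i,\gamma^i}$ lies in $J_{\gamma}$ and is monic of degree $d_i$ in $u$ are all correct. But the induction has a genuine gap at exactly the step you flag, and the repair you propose cannot work. The step you need is of the form: $h_p\in J_{\gamma'}\cap A_{\gamma'}^{prim}$ implies $h_pu^p\in J_{\gamma}\cap A_{\gamma}^{prim}$. As a statement about ideals this is false: for the one-vertex quiver with $a_{11}=2$ one computes $J_{2}=(x_2-x_1)\subset\Q[x_2-x_1]$, while $J_{3}$ is generated by the $S_3$-orbit of $(x_2-x_1)(x_3-x_1)$, which is the \emph{square} of the maximal homogeneous ideal of $\Q[x_2-x_1,x_3-x_1]$; hence $x_2-x_1$ lies in $J_{2}$ but not in $J_{3}$, and $(J_{\gamma'}\cap A_{\gamma'}^{prim})A_{\gamma}^{prim}\not\subset J_{\gamma}$. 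More seriously, the localize-and-descend mechanism is a dead end: the module you must control, $A_{\gamma}^{prim}/(J_{\gamma}\cap A_{\gamma}^{prim})$, is finite-dimensional and supported at the origin (this is precisely the finite-codimension statement from Theorem \ref{free_algebra}), so every homogeneous element of positive degree acts nilpotently on it. Inverting any cross-difference therefore annihilates this quotient outright; the localization map is zero, not injective, and there is nothing to descend from. The Claim in the proof of Theorem \ref{free_algebra} gives non-zero-divisibility only on $\widetilde{A_{\gamma}}/\widetilde{J_{\gamma}}$, where $\widetilde{J_{\gamma}}$ is the infinite-codimension ideal generated by the lower $J$'s --- not on the quotient by $J_{\gamma}$ itself --- so it cannot supply the injectivity you want. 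To salvage the induction you would have to prove directly that the specific products $h_pu^p$, using the surplus degree $\deg h_p>t_{\gamma'}$, lie in $J_{\gamma}$ (as happens in the example above for degree reasons); that is a genuinely new combinatorial assertion about these ideals which your proposal does not establish.

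For comparison, the paper avoids any induction on $\sum_i\gamma^i$. It first replaces $a_{ii}$ by $\max(1,a_{ii})$ (which shrinks $J_{\gamma}$ and preserves $N_{\gamma}$, so the fractional-ideal issue disappears), and then proves a self-contained lemma (Lemma \ref{effective_gen}): for an ideal of $\mk[z_1,\dots,z_n]$ generated by monomials $l_1^{d_{i1}}\cdots l_s^{d_{is}}$ in pairwise independent linear forms, finite codimension already forces the ideal to contain all of degree $\geq d_1+\dots+d_s-n+1$ with $d_j=\max_id_{ij}$. Since finite codimension of $J_{\gamma}$ was established in the proof of Theorem \ref{free_algebra}, the bound follows from the count $d_1+\dots+d_s=\frac12\bigl(\sum_{i\ne j}a_{ij}\gamma^i\gamma^j+\sum_i(a_{ii}-1)\gamma^i(\gamma^i-1)\bigr)$ and $n=\sum_i\gamma^i-1$. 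That lemma is exactly the kind of statement your inductive step is missing.
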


\begin{proof}According to the proof of Theorem \ref{free_algebra}, we have \begin{equation}\label{equal_dim}\dim V_{\gamma,k}^{prim}=\dim (\cH_{\gamma}^{prim}/J_{\gamma}^{P_{\gamma}})^{k-\chi_Q(\gamma,\gamma)}.\end{equation}
Recall that $P_{\gamma}=\prod\limits_{i\in I}S_{\gamma^i},$
$$A_{\gamma}^{prim}:=\Q[(x_{j,\alpha_2}-x_{i,\alpha_1})_{i,j\in I,1\leq \alpha_1\leq \gamma^i,1\leq\alpha_2\leq \gamma^j}],\quad
\cH_{\gamma}^{prim}:=(A_{\gamma}^{prim})^{P_{\gamma}},$$
and $J_{\gamma}$ is the smallest $P_{\gamma}$-stable $A_{\gamma}^{prim}$-submodule of the localization
$$A_{\gamma}^{prim}[(x_{i,\alpha_2}-x_{i,\alpha_1})^{-1}_{i\in I,1\leq \alpha_1<\alpha_2\leq \gamma^i}],$$ such that for all decompositions $\gamma=\gamma_1+\gamma_2,$ $\gamma_1,\gamma_2\in\Z_{\geq 0}^I\setminus \{0\},$ we have that
\begin{equation}\label{expr}\frac{\prod\limits_{i,j\in I}\prod\limits_{\alpha_1=1}^{\gamma_1^i}\prod\limits_{\alpha_2=\gamma_1^j+1}^{\gamma^j}(x_{j,\alpha_2}-x_{i,\alpha_1})^{a_{ij}}}
{\prod\limits_{i\in I}\prod\limits_{\alpha_1=1}^{\gamma_1^i}\prod\limits_{\alpha_2=\gamma_1^i+1}^{\gamma^i}(x_{i,\alpha_2}-x_{i,\alpha_1})}\in J_{\gamma}.\end{equation}
Recall that we take the standard even grading on $A_{\gamma}^{prim}$ with $\deg(x_{j,\alpha_2}-x_{i,\alpha_1})=2,$ and the induced grading on $\cH_{\gamma}^{prim}.$

According to \eqref{equal_dim}, it suffices to prove inclusions
\begin{equation}\label{ideal_generates}(A_{\gamma}^{prim})^d\subset J_{\gamma}\text{ for } d\geq 2N(Q).\end{equation}
For any $i,j\in I,$ put $$a_{ij}':=\begin{cases}a_{ij} & \text{ if }i\ne j;\\
\max(1,a_{ii}) & \text{ if }i=j.\end{cases}$$
Take the quiver $Q':=(I,a_{ij}').$ Note that $N_{\gamma}(Q)=N_{\gamma}(Q'),$ and if we replace $Q$ by $Q',$ then the new fractional $J_{\gamma}$ will be contained in the initial one.
Hence, in order to prove inclusions \eqref{ideal_generates}, we may and will assume that $a_{ii}\geq 1$ for $i\in I,$ and so $J_{\gamma}\subset A_{\gamma}^{prim}.$ We will deduce \eqref{ideal_generates} from the following more general result.

\begin{lemma}\label{effective_gen}Let $\mathrm{k}$ be arbitrary field, and consider the graded algebra of polynomials $B=\mathrm{k}[z_1,\dots,z_n],$ $n\geq 1,$ with grading $\deg(z_i)=1.$ Suppose that
$l_1,\dots,l_s\in B^1$ are pairwise linearly independent non-zero linear forms in $z_i.$ Take some non-empty set of polynomials $\{P_1,\dots,P_r\}\subset B$ of the form
$$P_i=l_1^{d_{i1}}\dots l_s^{d_{is}},$$
where $d_{ij}\in\Z_{\geq 0}.$ Put $d_j:=\max_{1\leq i\leq r}d_{ij},$ $1\leq j\leq s.$ Then the following are equivalent:

(i) $B^d\subset (P_1,\dots,P_r)$ for $d\geq d_1+\dots+d_s-n+1;$

(ii) the ideal $(P_1,\dots,P_r)\subset B$ has finite codimension;

(iii) For any sequence $p_1,\dots,p_r$ of numbers in $\{1,\dots,s\},$ such that $d_{i,p_i}>0$ for $1\leq i\leq r,$ the linear forms $l_{p_1},\dots,l_{p_r}$ generate the space $B^1.$\end{lemma}

\begin{proof}Both implications $(i)\Rightarrow (ii)$ and $(ii)\Rightarrow (iii)$ are evident. So we are left to prove implication
$(iii)\Rightarrow (i).$

Put $D:=d_1+\dots+d_s-n+1.$ If $D\leq 0,$ then one of the polynomials $P_i$ is constant, and there is nothing to prove. So, we assume that $D>0.$

We proceed by induction on $D+n.$ If $D+n=2,$ then $n=s=d_1=D=1,$ hence $(P_1,\dots,P_r)\supset (z_1),$ and the statement is proved.

Assume that the implication holds for  $D+n<k_0>2.$ We will prove that it holds for $D+n=k_0.$ Consider the cases.

{\it The case 0.} One of $P_i$ is constant. Then, there is nothing to prove.

{\it The case 1.} We have $P_i=l_j$ for some $i,j.$ Then it suffices to show that the images of $P_{i^\prime}$ with $d_{i'j}=0$ in $B/(l_j)$ generate
$(B/(l_j))^d$ for $d\geq D.$ If $n=1,$ then this is clear, and if $n>1,$ then this follows from the induction hypothesis.

{\it The case 2.} All $P_i$ have degree at least $2.$ Take $d\geq D,$ and $f\in B^d.$ Choose some sequence $p_1,\dots,p_r$ of numbers in $\{1,\dots,s\},$ such that $d_{i,p_i}>0$ for $1\leq i\leq r.$ Then by $(iii)$ we can write
$$f=\sum\limits_{i=1}^rl_{p_i}g_i,\quad g_i\in B^{d-1}.$$
It suffices to show that for each $1\leq i\leq r,$ the polynomial $g_i$ belongs to an ideal generated by $P_{i^\prime}$ with $l_{p_i}\nmid P_{i^\prime},$
and $\frac{P_{i^{\prime\prime}}}{l_{p_i}}$ with $l_{p_i}\mid P_{i^{\prime\prime}}.$ But this follows from induction hypothesis.

In each case, we have proved the desired implication. Induction statement is proved. Lemma is proved.
\end{proof}

Now, consider the cases. If $\sum\limits_{i}\gamma^i=1,$ then $N_{\gamma}(Q)=1,$ and $A_{\gamma}^{prim}=\Q,$ hence inclusions \eqref{ideal_generates}
hold. Further, if $\sum\limits_{i}\gamma^i\geq 2,$ then we apply Lemma \ref{effective_gen} to $B=A_{\gamma}^{prim},$ the linear forms $(x_{j,\alpha_2}-x_{i,\alpha_1})$
(defined up to sign), and polynomials which are in the $P_{\gamma}$-orbit of the expressions \eqref{expr}. They generate precisely the ideal $J_{\gamma}\subset A_{\gamma}^{prim}.$ We have already shown in the proof of Theorem \ref{free_algebra} that the ideal $J_{\gamma}\subset A_{\gamma}^{prim}$ has finite codimension. Therefore, implication $(ii)\Rightarrow (i)$ from Lemma \ref{effective_gen} gives the desired inclusions \eqref{ideal_generates}. Indeed,
we have that $$d_1+\dots +d_s=\frac12(\sum\limits_{\substack{i,j\in I,\\ i\ne j}}a_{ij}\gamma^i\gamma^j+\sum\limits_{i\in I}(a_{ii}-1)\gamma^i(\gamma^i-1)),
\quad n=\sum\limits_{i\in I}\gamma^i-1,$$
hence $N_{\gamma}(Q)=d_1+\dots+d_s-n+1.$ The inclusions \eqref{ideal_generates} and Theorem are proved.
\end{proof}

\section{Applications to quantum DT invariants}
\label{s:DT_invariants}

Define the generating function for the COHA $\cH$ of symmetric quiver $Q$ by the following formula:
$$H_Q(\{t_i\}_{i\in I},q):=\sum\limits_{\gamma\in\Z_{\geq 0}^I,k\in\Z} (-1)^k\dim(\cH_{\gamma,k})t^{\gamma}q^{\frac{k}2}\in \Z((q^{\frac12}))[[\{t_i\}_{i\in I}]],$$
where $t^{\gamma}:=\prod\limits_{i\in I}t_i^{\gamma^i}.$ Note that we have an equality
\begin{equation}\label{stupid}H_Q=\sum\limits_{\gamma\in\Z_{\geq 0}^I}\frac{(-q^{\frac12})^{\chi_Q(\gamma,\gamma)}}{\prod\limits_{i\in I}(1-q)(1-q^2)\dots(1-q^{\gamma^i})}t^{\gamma}.\end{equation}

Recall the notation
$$(z;q)_{\infty}:=\prod\limits_{n\in\Z_{\geq 0}}(1-q^nz)$$
(the so-called $q$-Pochhammer symbol).

\begin{cor}\label{positive}Let $Q$ be a symmetric quiver. Then we have a decomposition
$$H_Q(\{t_i\}_{i\in I},q)=\prod\limits_{\gamma\in\Z_{\geq 0}^I,k\in\Z}(q^{\frac{k}2}x^{\gamma};q)_{\infty}^{(-1)^{k-1}c_{\gamma,k}},$$
where $c_{\gamma,k}$ are non-negative integer numbers.
Moreover, if $c_{\gamma,k}\ne 0,$ then $\gamma\ne 0,$ $$k\equiv \chi_Q(\gamma,\gamma)\text{ mod }2,\quad\text{and}\quad \chi_Q(\gamma,\gamma)\leq k< \chi_Q(\gamma,\gamma)+2N_{\gamma}(Q).$$
In particular, for a fixed $\gamma$ only finitely many of $c_{\gamma,k}$ are non-zero.\end{cor}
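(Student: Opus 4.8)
The plan is to read off the integers $c_{\gamma,k}$ as the dimensions of the primitive generating spaces of Theorem \ref{free_algebra}, and to obtain the product decomposition as the factorization of a super-character. First I would note that $H_Q$ is exactly the super-character of $(\cH,\star)$: since $\cH_{\gamma,k}=0$ unless $k\equiv\chi_Q(\gamma,\gamma)\text{ mod }2$, and the parity of $\cH_{\gamma,k}$ equals $\epsilon(\gamma)=\chi_Q(\gamma,\gamma)\text{ mod }2$, on every nonzero graded component one has $(-1)^{k}=(-1)^{\epsilon(\gamma)}$. Hence
$$H_Q=\sum_{\gamma,k}(-1)^{\epsilon(\gamma)}\dim(\cH_{\gamma,k})\,t^{\gamma}q^{k/2},$$
where $t^{\gamma}=\prod_{i\in I}t_i^{\gamma^i}$ is the monomial written $x^{\gamma}$ in the statement, counts homogeneous elements weighted by their parity sign.

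Next, by Theorem \ref{free_algebra} the algebra $(\cH,\star)$ is free super-commutative on $V=V^{prim}\otimes\Q[x]$ with $\deg x=(0,2)$. A homogeneous basis of the generating space is therefore $\{e_{\gamma,\beta}\,x^{n}\}$, where $\{e_{\gamma,\beta}\}$ runs over a homogeneous basis of $V^{prim}$ and $n\geq 0$; the generator $e_{\gamma,\beta}x^{n}$ has bidegree $(\gamma,k+2n)$ and parity $\epsilon(\gamma)$, where $k$ is the second degree of $e_{\gamma,\beta}$. Since the super-character is multiplicative for the super-tensor decomposition $S(V)=\bigotimes_{v}S(\Q v)$ of a free super-commutative algebra, and since a single even (resp. odd) generator of weight $w$ contributes the factor $(1-w)^{-1}$ (resp. $(1-w)$), with common exponent $(-1)^{k-1}$ because the parity is $k\text{ mod }2$, the family attached to one primitive generator of bidegree $(\gamma,k)$, of weights $t^{\gamma}q^{(k+2n)/2}=t^{\gamma}q^{k/2}q^{n}$, contributes
$$\prod_{n\geq 0}\bigl(1-t^{\gamma}q^{k/2}q^{n}\bigr)^{(-1)^{k-1}}=(t^{\gamma}q^{k/2};q)_{\infty}^{(-1)^{k-1}}.$$
Setting $c_{\gamma,k}:=\dim V_{\gamma,k}^{prim}$ and multiplying over all primitive generators produces precisely the claimed decomposition. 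The numerical assertions are then immediate: the equality $c_{\gamma,k}=\dim V_{\gamma,k}^{prim}$ gives $c_{\gamma,k}\in\Z_{\geq 0}$, and Theorem \ref{upper_bound} forces $c_{\gamma,k}=0$ unless $\gamma\ne 0$, $k\equiv\chi_Q(\gamma,\gamma)\text{ mod }2$ and $\chi_Q(\gamma,\gamma)\leq k<\chi_Q(\gamma,\gamma)+2N_{\gamma}(Q)$, so that for fixed $\gamma$ only finitely many $c_{\gamma,k}$ are nonzero.

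The one step that genuinely requires care, and which I expect to be the main (if routine) obstacle, is to make sense of the infinite product in $\Z((q^{1/2}))[[\{t_i\}_{i\in I}]]$ and thereby to justify the formal manipulation above. Here the finiteness just recorded is essential: to compute the coefficient of a fixed monomial $t^{\mu}$, only the factors with $\gamma\leq\mu$ can contribute, of which there are finitely many $\gamma$, and for each such $\gamma$ only finitely many $k$ have $c_{\gamma,k}\ne 0$ by Theorem \ref{upper_bound}; hence only finitely many Pochhammer factors are involved in any fixed $t$-coefficient, giving $t$-adic convergence. One must also check that each factor is itself a well-defined Laurent series in $q^{1/2}$: in $(t^{\gamma}q^{k/2};q)_{\infty}$ the coefficient of $(t^{\gamma})^{m}$ is supported in $q$-degrees at least $\tfrac{m(m-1)}{2}+\tfrac{mk}{2}$, which is bounded below for each $m$ and tends to $+\infty$ with $m$, so that every $t$-coefficient of the product is a Laurent series bounded below in $q$. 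Granting this, the equality of the two sides is exactly the formal identity derived above, and the Corollary follows.
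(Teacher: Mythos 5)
Your proposal is correct and follows essentially the same route as the paper: set $c_{\gamma,k}=\dim V_{\gamma,k}^{prim}$, use the freeness from Theorem \ref{free_algebra} so that the generating function factors over generators (each primitive generator of bidegree $(\gamma,k)$ together with its $\Q[x]$-tower producing the factor $(q^{k/2}t^{\gamma};q)_{\infty}^{(-1)^{k-1}}$), and invoke Theorem \ref{upper_bound} for the support constraints. Your additional verification of convergence of the infinite product in $\Z((q^{1/2}))[[\{t_i\}]]$ is a detail the paper leaves implicit, but it does not change the argument.
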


\begin{proof}Corollary follows immediately from Theorem \ref{free_algebra} and Theorem \ref{upper_bound} if we put $c_{\gamma,k}=\dim V_{\gamma,k}^{prim}.$
Indeed, the generating function of the free super-commutative subalgebra generated by one element of bidegree $(\gamma,k)$ equals to
$$(1-q^{\frac{k}2}t^{\gamma})^{(-1)^{k-1}}.$$ The resulting decomposition follows from free generation of $\cH$ by $V,$ and from Theorem \ref{upper_bound}.\end{proof}

In the notation of Corollary \ref{positive} and the terminology of \cite{KS}, the polynomials
$$\Omega(\gamma)(q):=\sum\limits_{k\in\Z}c_{\gamma,k}q^{\frac{k}2}\in\Z[q^{\pm \frac12}]$$
are quantum Donaldson-Thomas invariants of the quiver $Q$ with trivial potential, stability function with unique slope, and the dimension vector $\gamma.$
It follows from Corollary \ref{positive} that for $\gamma\ne 0$ we have
$$\Omega(\gamma)(q)=q^{\frac12 \chi_Q(\gamma,\gamma)} \tilde{\Omega}(\gamma)(q),$$
where $\tilde{\Omega}(\gamma)(q)$ is a polynomial with non-negative coefficients, $\tilde{\Omega}(\gamma)(0)=1,$ and $\deg(\tilde{\Omega}(\gamma)(q))<N_{\gamma}(Q).$

We would like to mention a connection with the paper of Reineke \cite{R}. In loc. cit., for each integer $m\geq 1,$ the following $q$-hypergeometric series is considered:
$$H(q,t)=H_m(q,t):=\sum\limits_{n\geq 0}\frac{q^{(m-1)\binom{n}2}}{(1-q^{-1})(1-q^{-2})\dots(1-q^{-n})}t^n\in\Z(q)[[t]].$$
Denote by $Q_m$ the $m$-loop quiver (a quiver with one vertex and $m$ loops). Since $\chi_{Q_m}(n_1,n_2)=(1-m)n_1n_2,$ the formula \eqref{stupid} implies
$$H_m(q,t)=H_{Q_m}((-1)^{m-1}tq^{\frac{1-m}2},q^{-1}).$$
Also, we have $N_n(Q_m)=(m-1)\binom{n}2-n+2.$ Therefore, Corollary \ref{positive} implies the following.

\begin{cor}\label{positive_special}
$$H_m(q,(-1)^{m-1}t)=\prod\limits_{n\geq 1,k\in\Z}(q^kt^n;q^{-1})^{-(-1)^{(m-1)n}d_{n,k}},$$
where $d_{n,k}$ are non-negative integers, and inequality $d_{n,k}>0$ implies $$n-1\leq k\leq (m-1)\binom{n}2.$$
In particular, for a fixed $n$ only finitely many of $d_{n,k}$ are non-zero.\end{cor}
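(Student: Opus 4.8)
The plan is to deduce Corollary~\ref{positive_special} directly from Corollary~\ref{positive} applied to the single-vertex quiver $Q_m$, via a change of variables and a reindexing. First I would record the relevant invariants for $Q_m$: the dimension vector is a single integer $n\in\Z_{\geq 0}$, the Euler form is $\chi_{Q_m}(n,n)=(1-m)n^2$, and $N_n(Q_m)=(m-1)\binom{n}{2}-n+2$. Corollary~\ref{positive} then reads
$$H_{Q_m}(T,Q)=\prod_{n\geq 1,\,k\in\Z}(Q^{k/2}T^n;Q)_{\infty}^{(-1)^{k-1}c_{n,k}},$$
where $c_{n,k}=\dim V^{prim}_{n,k}$ are non-negative integers, supported on $\chi_{Q_m}(n,n)\leq k<\chi_{Q_m}(n,n)+2N_n(Q_m)$ with $k\equiv\chi_{Q_m}(n,n)\pmod 2$.

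Next I would invoke the identity $H_m(q,t)=H_{Q_m}((-1)^{m-1}tq^{\frac{1-m}{2}},q^{-1})$ recorded above, substituting $t\mapsto(-1)^{m-1}t$ to cancel the sign and obtain $H_m(q,(-1)^{m-1}t)=H_{Q_m}(tq^{\frac{1-m}{2}},q^{-1})$. Setting $T=tq^{\frac{1-m}{2}}$ and $Q=q^{-1}$ in the product, the base of each factor becomes
$$Q^{k/2}T^n=t^n q^{-k/2+(1-m)n/2},$$
with step $q^{-1}$, so the factor has the shape $(q^{k'}t^n;q^{-1})_{\infty}$ once I set $k':=\tfrac12\bigl((1-m)n-k\bigr)$. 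The congruence $k\equiv(1-m)n^2\equiv(1-m)n\pmod 2$ ensures $k'\in\Z$. For the exponent, writing $k=(1-m)n-2k'$ gives $(-1)^{k-1}=-(-1)^{(m-1)n}$, so each factor contributes $(q^{k'}t^n;q^{-1})_{\infty}^{-(-1)^{(m-1)n}d_{n,k'}}$ with $d_{n,k'}:=c_{n,k}\geq 0$, which is exactly the claimed product.

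Finally I would translate the support bounds. Since $k'$ decreases as $k$ increases, the inclusive lower end $k=\chi_{Q_m}(n,n)$ maps to $k'=(m-1)\binom{n}{2}$, while the exclusive upper end $k=\chi_{Q_m}(n,n)+2N_n(Q_m)$ maps to $k'=n-2$; hence the range $\chi_{Q_m}(n,n)\leq k<\chi_{Q_m}(n,n)+2N_n(Q_m)$ becomes precisely $n-1\leq k'\leq(m-1)\binom{n}{2}$. Non-negativity of the $d_{n,k'}$ and finiteness of their support for fixed $n$ are inherited from Corollary~\ref{positive}. The argument is entirely routine; the only points requiring care---and thus the closest thing to an obstacle---are arithmetic: verifying that $k'$ is an integer through the parity of $k$, correctly negating $(-1)^{k-1}$ under the change of variables, and converting the half-open range in $k$ into the closed range in $k'$.
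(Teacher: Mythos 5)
Your proposal is correct and follows exactly the route the paper intends: the paper's own "proof" is just the remark that Corollary \ref{positive} together with the identity $H_m(q,t)=H_{Q_m}((-1)^{m-1}tq^{\frac{1-m}{2}},q^{-1})$ and the computations $\chi_{Q_m}(n,n)=(1-m)n^2$, $N_n(Q_m)=(m-1)\binom{n}{2}-n+2$ yields the statement, and you have carried out precisely that substitution, with the reindexing $k'=\tfrac12((1-m)n-k)$, the parity check, the sign $(-1)^{k-1}=-(-1)^{(m-1)n}$, and the translation of the half-open range all verified correctly.
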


This Corollary is stronger than Conjecture 3.3 in \cite{R}. According to notation of \cite{R}, the quantized Donaldson-Thomas type invariant
$DT_n^{(m)}(q)$ equals to $\sum\limits_{k\in\Z} d_{n,k}q^k.$ Thus, Corollary \ref{positive_special} implies that $DT_n^{(m)}(q)$ is a monic polynomial of degree $(m-1)\binom{n}2,$ divisible by $q^{n-1},$ with non-negative coefficients.

With above said, the numbers $d_{n,k}$ are dimensions of graded components of finite-dimensional graded algebras $\cH_n^{prim}/J_n^{S_n}.$ It would be interesting to compare this interpretation with explicit formulas for $DT_n^{(m)}(q)$ in \cite{R}, Theorem 6.8.

%We would like to mention that if $\Gamma\subset\Z^I$ is a sublattice, and $\cH_{\gamma}=\bigoplus\limits_{\gamma\in\Z_{\geq 0}^I\cap\Gamma}\cH_{\gamma}$
%is the corresponding subalgebra, then it is not necessarily free super-commutative.

%\begin{prop}Let $Q$ be a symmetric quiver with two vertices ($I=\{1,2\}$), and $$a_{11}=a_{22}=1,\quad a_{12}=a_{21}=0.$$
%Take $\Gamma=\Z\cdot(1,1)\subset\Z^I.$ Then the commutative algebra $\cH_{\Gamma}$ cannot be freely generated by
%a $\Gamma$-graded subspace $V\subset\cH_{\Gamma}.$\end{prop}

%\begin{proof}First note that $\cH_{\Gamma}$ is indeed commutative, because $\chi_{Q\mid\Gamma}=0.$
%Now suppose that some $\Gamma$-graded subspace $V\subset \cH_{\Gamma}$
%freely generates $\cH_{\gamma}.$ Then we obviously have that $V_{(1,1)}=\cH_{(1,1)}.$
%Hence, monomials $x_{1,1}^mx_{2,1}^n,$ $m,n\in\Z_{\geq 0},$ form a basis of $V_{(1,1)}.$ But we have
%$$x_{1,1}\star x_{2,1}=1\star x_{1,1}x_{2,1}=(x_{1,1}+x_{1,2})(x_{2,1}+x_{2,2}),$$
%where $1,x_{1,1},x_{2,1},x_{1,1}x_{2,1}$ are basis elements of $V_{(1,1)}.$ This leads to contradiction.\end{proof}

\end{document}